\newtheorem{thm}{Theorem}[section]
\newtheorem{lem}[thm]{Lemma}
\newtheorem{cor}[thm]{Corollary}
\newtheorem{prop}[thm]{Proposition}
\theoremstyle{definition}
\newtheorem{definition}[thm]{Definition}
\newtheorem{exam}[thm]{Example}
\theoremstyle{remark}
\newtheorem{rem}[thm]{Remark}
\newcommand{\bs}{\boldsymbol}
\DeclareMathOperator{\dens}{dens}
\newcommand{\N}{\mathbb{N}}
\newcommand{\R}{\mathbb{R}}
\newcommand{\Z}{\mathbb{Z}}
\renewcommand{\phi}{\varphi}
\renewcommand{\rho}{\varrho}
\renewcommand{\epsilon}{\varepsilon}
\renewcommand{\geq}{\geqslant}
\renewcommand{\leq}{\leqslant}
\begin{document}

\title[Catalan numbers and discrepancy]{Catalan numbers as discrepancies for a family of substitutions on infinite alphabets}

\author{Dirk Frettl\"oh}
\address{Technische Fakult\"at, Bielefeld University,\newline\hspace*{\parindent}Postfach 100131, 33501 Bielefeld, Germany}
\email{dfrettloeh@techfak.de}

\author{Alexey~Garber}
\address{
School of Mathematical \& Statistical Sciences,
\newline\hspace*{\parindent}The University of Texas Rio Grande Valley, \newline\hspace*{\parindent}1 West University Blvd., Brownsville, TX 78520, USA.}
\email{alexey.garber@utrgv.edu}

\author{Neil Mañibo}
\address{Fakultät für Mathematik,  Bielefeld University,\newline\hspace*{\parindent}Postfach 100131, 33501 Bielefeld, Germany}
\address{ School of Mathematics and Statistics,\newline\hspace*{\parindent}Open University, Walton Hall, Kents Hill, 
\newline\hspace*{\parindent}Milton Keynes, United Kingdom MK7 6AA}
\email{cmanibo@math.uni-bielefeld.de, neil.manibo@open.ac.uk}

\date{\today}
\subjclass{05B45, 37B10, 52C23}
\keywords{Substitutions, Delone set, infinite alphabets, discrepancy, Catalan numbers, aperiodic tilings}

\begin{abstract}
In this work, we consider a class of substitutions on infinite alphabets and
show that they exhibit a growth behaviour which is impossible for substitutions on finite alphabets. While for both settings the leading term of the tile counting function is exponential (and guided by the inflation factor), the behaviour of the second-order term  is strikingly different. For the finite setting, it is known that the second term is also exponential or exponential times a polynomial. 
We exhibit a large family of examples  where the second term is at least exponential in $n$ divided by half-integer powers of $n$, where $n$ is the number of substitution steps. In particular, we provide an identity for this discrepancy in terms of linear combinations of Catalan numbers.
\end{abstract}

\maketitle

\begin{center}
\emph{Dedicated to our dear colleague and good friend, Uwe.}
\end{center}

\section{Introduction} \label{sec:intro}

Exponential growth or decay appears in many discrete or continuous dynamical systems. A perfect example of such a behaviour is the famous Fibonacci sequence that can be used to describe a (very simplified) growth of a population of bunnies. The Fibonacci sequence grows like $\Theta(\phi^n)$ where $\phi=\frac{\sqrt5+1}{2}$ is the golden mean; see Section \ref{sec:notations} for the notations we use for asymptotic behaviour.

In the sequel, we assume that the reader is familiar with basic terms and concepts of aperiodic order. Rather than explaining those basic terms here we will refer to the monograph \cite{BG} when necessary.

For (classical) primitive substitution tilings \cite{BG}, the asymptotic growth of many quantities scales like $\lambda^n$, where $\lambda$ is the associated inflation 
factor\,---\,respectively its $d$-th power, if the tiling lives in $\R^d$, for $d \ge 2$. For example, the number of tiles of all types (or of any single tile type) in the $n$-th iteration of the substitution applied to one prototile grows like $\Theta(\lambda^n)$. Both these phenomena originate from the uniqueness of the Perron--Frobenius eigenvalue $\lambda$ of the substitution matrix $M$ and from the Jordan form of $M$ and its powers $M^n$.

In the context of primitive substitutions, the {\it discrepancy} is the difference between the tracked quantity $q(n)$ and the expected value $c\cdot \lambda^n$ for the appropriate constant $c>0$. Its asymptotics are determined by the second largest (in absolute value) eigenvalue $\lambda'$. Again by using the Jordan form of $M^n$, this discrepancy can be written as $P(n)|\lambda'|^n$ for some polynomial $P$ if $\lambda'$ is real, and with some additional trigonometric factor if $\lambda'$ is complex.

The discrepancy plays a crucial role in the study of bounded-distance equivalence relations for point sets. Two Delone sets are bounded-distance equivalent to each other, if there is a bijection $f$ between them such that $|f(x)-x|<C$ for some $C>0$; see \cite{FGS,SS,Sol14} and references therein.  A similar notion of discrepancy lies in the core of the study of bounded remainder sets; see \cite{FG,HKK} and references therein.

This paper studies discrepancies for a certain class of substitutions on infinite alphabets, which belongs to a generalisation of primitive substitutions introduced in \cite{MRW,MRW2}. This particular class was studied in \cite{FGM}, where it was shown that in the most general setting it allows any real number greater than $2$ to be the inflation factor
of such a substitution.

More precisely, given a bounded sequence of non-negative integers ${\bs a}=(a_i)_{i\geq 0}$ satisfying certain assumptions, ${\bs a}$ defines a ``pre-substitution'' on the countable alphabet $\mathbb{N}_0\simeq \{[i]\}$; see Eq.~\eqref{eq:pre-subs}. 
This pre-substitution can be extended to a substitution $\varrho^{ }_{{\bs a}}$ on a compact alphabet $\mathcal A$ via a suitable embedding of $\mathbb{N}_0$ into a shift space; see \cite{FGM} for the complete construction. This substitution $\varrho^{ }_{{\bs a}}$ 
on a now compact alphabet $\mathcal{A}$
satisfies several properties that hold for primitive substitutions, including existence of letter frequencies and of an associated geometric substitution with inflation factor $\lambda$. We refer to \cite{FGM} and \cite{MRW,MRW2} for more details. We briefly remark that the tilings generated by these substitutions have infinite local complexity (ILC), both combinatorially and geometrically. 
One primary motivation for the series of works of the authors in the infinite alphabet regime is the development of an infinite-dimensional renormalisation scheme which would cover such objects, including the pinwheel tilings of the plane, which have since resisted a complete spectral characterisation; see \cite{BGF,GD} for some statistical and spectral properties of pinwheel tilings.

The question that inspired this paper was whether the Delone sets arising from these substitutions are bounded-distance equivalent to $\alpha \Z$ for some appropriate $\alpha$. 
Hence, our main focus is the {\it discrepancy function} $d_{\bs a}(n)$, which we define as follows. First, we count the number of letters in $\varrho^n_{\bs a}([0])$. Due to existence of frequencies and associated tile lengths, this quantity is $\sfrac{1}{c_{\bs{a}}}\cdot \lambda^n+d_{\bs a}(n)$, 
where $d_{\bs a}(n) \in o(\lambda^n)$. Here
\begin{itemize}
\item $\lambda$ is the inflation factor, and
\item $c_{\bs{a}}$ is the  average tile length, assuming the length of $[0]$ is 1 (hence $\sfrac{1}{c_{\bs{a}}}$ equals the density of the corresponding Delone set in $\mathbb{R}$, see \cite{BG}).
\end{itemize}
The paper is organised as follows. 
In Section~\ref{sec:def}, we provide necessary definitions and some background on substitutions on infinite alphabets, together with specific results for the class of substitutions we will be considering. In Section \ref{sec:catalan}, we prove that, if $a_i=1$ for all $i$, the discrepancy $d_{\bs a}$ is (a multiple of) the tail of the power series 
$$\frac{1-\sqrt{1-4x}}{2x}=\sum_{i=0}^\infty C_ix^i$$
evaluated at $x=\sfrac{4}{25}$. Here $C_k$ is the $k$-th Catalan number; see for instance \cite{Pak,Sta1,Sta2} or OEIS sequence \href{http://oeis.org/A000108}{A000108} \cite{oeis}. In particular, in Theorem \ref{thm:all1}, we show for this specific choice of ${\bs a}$  that $d_{\bs a}(n)$ belongs to $\Theta\left( \frac{2^n}{n^{\sfrac32}}\right)$. 

In Section \ref{sec:stab}, we tackle the more general case of eventually constant sequences ${\bs a}$. We show that, in that case, the discrepancy $d_{\bs a}(n)$ is also related to Catalan numbers through a non-homogeneous linear recurrence relation from Lemma \ref{lem:catlinear}. In Theorem \ref{thm:stab}, we show that there exists a non-negative integer $q$ such that some subsequence of $d_{\bs a}(n)$  grows at least as fast as $\Omega \left( \frac{2^n}{n^{q+\sfrac32}}\right)$. This implies that none of the Delone sets arising from the substitutions in this class is bounded distance equivalent to $\alpha \Z$, for any $\alpha>0$. This is Corollary~\ref{cor:bde}. By the results in \cite{SS} (see also \cite{FGS}), this in turn implies that each substitution $\rho_{\bs a}$ gives rise to uncountably many equivalence classes of Delone sets with respect to bounded distance equivalence. 

We want to emphasise that both asymptotics are completely different from the case of substitutions on finite alphabets where (for some subsequence) the similarly defined discrepancy is of order $\Theta\left(n^q|\lambda'|^n\right)$ for some non-negative integer $q$ and for some (possibly complex) number $\lambda'$. In Section \ref{sec:exam}, we apply the general framework from Section~\ref{sec:stab} to two concrete examples. Working through these examples, we show how the results from Section~\ref{sec:stab} are used to get a recurrence relation for the discrepancy function. Notably, we are able to get explicit formulas for the discrepancies in these two examples.

Section \ref{sec:moreexam} complements the above results by numerical computations of discrepancies of further examples, exploring the different phenomena that may occur for substitutions on infinite alphabets. Our examples study possible cases for some auxiliary parameter $\mu$ and discuss how the corresponding values of $\lambda$ change (or do not change) the growth rate of the discrepancy function.

\subsection{Notations for asymptotic behaviour} \label{sec:notations}

We use the following families for asymptotic behaviour \cite{K}. Here, we assume that $f$ and $g$ are functions of a non-negative integer parameter $n$ and that $g(n)> 0$ for sufficiently large $n$. 
\begin{itemize}
    \item $f\in o(g)$ if $\lim\limits_{n\rightarrow \infty}\frac{|f(n)|}{g(n)}=0$; 
    \item $f\in O(g)$ if $\limsup\limits_{n\rightarrow \infty}\frac{|f(n)|}{g(n)}$ is finite;
    \item $f\in \Omega(g)$ if $\liminf\limits_{n\rightarrow \infty}\frac{|f(n)|}{g(n)}>0$ or infinite;
    \item $f\in \Theta(g)$ if both $\limsup\limits_{n\rightarrow \infty}\frac{|f(n)|}{g(n)}$ and $\liminf\limits_{n\rightarrow \infty}\frac{|f(n)|}{g(n)}$ are positive numbers. 
\end{itemize}

\section{Preliminaries}
\label{sec:def}
\subsection{A class of substitutions over a compact alphabet}
Let $\mathcal{A}=\mathbb{N}_0\cup \left\{\infty\right\}$ be the one-point compactification of the set $\mathbb{N}_0$ of non-negative integers. 
We consider a class of substitutions on the alphabet $\mathcal{A}$ that maps letters in $\mathcal{A}$ to finite words over $\mathcal{A}$. The class is parameterised by a bounded sequence $\bs{a}=(a_i)_i$ of non-negative integers. 
We restrict to sequences $\bs{a}=(a_i)_i$ which are \emph{eventually constant}, i.e., there exists $k\in \mathbb{N}$ for which $a_i=a$ for some $a>0$ and for all 
$i\geqslant k$. We additionally require that $a_0>0$. The substitution associated to $\bs{a}$ is constructed as follows. We define the following ``pre-substitution'' on letters $[i]$ with $i\in \N_0$, see \cite[Def. 3.1]{FGM}, via
\begin{align}
\varrho^{ }_{\bs a}([0])&=[0]^{a_0}[1], \text{ and} \label{eq:pre-subs}\\
\varrho^{ }_{\bs a}([i])&=[0]^{a_i}[i-1][i+1] \; \text{ for }i>0. \nonumber
\end{align}
Since both $i-1$ and $i+1$ go to $\infty$ as $i\to \infty$, and since $a_i$ is eventually constant and is equal to $a$ for $i\geq k$, we then set 
\[
\varrho^{ }_{\bs a}([\infty])=[0]^{a}[\infty][\infty].
\]
This makes the rule $\varrho^{ }_{\bs{a}}\colon \mathcal{A}\to \mathcal{A}^{+}$ a continuous map, where the topology on the set $\mathcal{A}^{+}$ of finite words over $\mathcal A$ is the topology of a disjoint union; see \cite{MRW2}. Moreover, the restrictions we impose on ${\bs a}$ imply that $\varrho^{ }_{\bs{a}}$ is a \emph{primitive} substitution. In the compact alphabet setting, $\varrho_{\bs a}$ being primitive means that, for any non-empty open set $U\subset \mathcal{A}$, there exists a power $n:=n(U)$ such that $\varrho_{\bs a}^{n}(b)$ contains a letter in $U$, for all $b\in\mathcal{A}$; see \cite[Thm. 3.5]{FGM}. 
Note that such a rule results in a well-defined shift space $(X_{\varrho},S)$ that has many nice properties of primitive substitutions on finite alphabets (e.g., minimality, unique ergodicity); see \cite[Thm. 1.4]{FGM}. For more details on dynamical properties of substitutions on compact alphabets, we refer the reader to \cite{MRW2}.

Since we are interested in discrepancies for tile counting functions, we will focus on \emph{supertiles}, i.e., words of the form $\varrho^{n}_{\bs{a}}([i])$ and the associated substitution operator $M$, which is the generalisation of the substitution matrix for infinite alphabets. More specifically, we will be looking at the number of words in the $n$-th order supertile $\varrho^{n}_{\bs{a}}([0])$ of type $[0]$ as $n$ grows. 

\subsection{Substitution operator, natural length function, and frequencies}

Let $M$ be a bounded  linear operator on a Banach space $E$. The \emph{spectrum} of $M$, which we denote by $\sigma(M)$, is the set of all complex numbers $\lambda$ for which $(\lambda\bs{I}-M)$ is not invertible, where $\bs{I}$ denotes the identity operator. The \emph{spectral radius} of $M$ is defined as $r(M)=\sup\left\{|\lambda|\colon \lambda\in \sigma(M)\right\}$. Let $B_1(0)$ be the unit ball in $E$. The operator $M$ is called \emph{compact} if the image of $B_1(0)$ under $M$ is relatively compact. This is a rather strong condition. In fact, for the substitutions we treat in this work, the corresponding substitution operator is never compact; see \cite[Prop.~6.2]{MRW2}. A weaker condition is \emph{quasi-compactness}. An operator $M$ with spectral radius $1$ is called quasi-compact, if there exists a compact operator $N$ and $n\in\mathbb{N}$ such that $|M^n-N|^{ }_{\text{op}}<1$, where $|\cdot|^{ }_{\text{op}}$ is the operator norm. We refer to \cite{HH,MRW2} and references therein for details on such operators and their properties.

Most asymptotics for abelian quantities
(i.e., quantities which solely depend on the number of certain tiles within a supertile and not their location) for substitutions on finite alphabets are encoded by the corresponding substitution matrix. In the compact alphabet setting, one can associate a \emph{substitution operator} $M$ to the substitution $\varrho^{ }_{\bs{a}}$ as follows. Consider the Banach space $E=C(\mathcal{A})$ of continuous functions on $\mathcal{A}$ (with the sup norm). Define $M\colon E\to E$ to be 
\[
(Mf)(b)=\sum_{c\in \varrho^{ }_{\bs{a}}(b)} f(c),
\] 
where $\varrho^{ }_{\bs{a}}(b)$ is to be understood as a multiset. 
$M$ is a positive and bounded linear operator on $E$. 

Let $K$ be the positive cone in $E$ consisting of all non-negative continuous functions. A natural length function $\ell$ for $\varrho^{ }_{\bs{a}}$ is a function in $K$ for which $M\ell=\lambda\ell$ for some $\lambda\neq 0$. In other words, it corresponds to a non-negative eigenvector of $M$ in $K$ with non-zero eigenvalue $\lambda$.
If further $\lambda>1$ and $\ell$ is \emph{strictly positive}, i.e., $\ell(b)>0$ for all $b\in\mathcal{A}$, one can associate a geometric inflation rule to $\varrho^{ }_{\bs{a}}$ which generates substitution tilings on $\mathbb{R}$ with (possibly) infinitely many prototile lengths. The following result is proved in \cite{MRW2}. 

\begin{thm}\label{thm:length-function}
Let $\varrho$ be a substitution on a compact alphabet. Suppose that $\varrho$ is primitive and that the associated scaled substitution operator $\frac{1}{r(M)} M$ is quasi-compact. Then, $\varrho$ admits a unique (up to scalar multiplication) natural length function which is strictly positive with $\lambda=r(M)>1$.
\end{thm}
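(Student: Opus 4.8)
The plan is to prove Theorem~\ref{thm:length-function} as a Perron--Frobenius/Krein--Rutman statement for the positive bounded operator $M$ on the Banach lattice $E=C(\mathcal{A})$, combining three inputs: quasi-compactness of $T:=\tfrac1r M$ (where $r:=r(M)>0$), which gives a workable spectral picture at the top of the spectrum; positivity of $M$, which via Krein--Rutman produces a non-negative eigenfunction; and primitivity, which upgrades non-negativity to strict positivity, forces the eigenvalue to be $r$, and yields uniqueness. The expanding nature of the substitution then gives $r>1$.

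First I would extract the spectral picture. By hypothesis there are a compact operator $N$ and an $n\in\N$ with $|T^{n}-N|^{ }_{\text{op}}<1$; since the essential spectrum is invariant under compact perturbations and $r_{\text{ess}}(T^{n})=r_{\text{ess}}(T)^{n}$, we get $r_{\text{ess}}(T)^{n}=r_{\text{ess}}(T^{n}-N)\leq|T^{n}-N|^{ }_{\text{op}}<1$, hence $r_{\text{ess}}(T)<1=r(T)$. Consequently, outside any disc of radius $\rho\in(r_{\text{ess}}(T),1)$ the spectrum of $T$ consists of finitely many eigenvalues of finite algebraic multiplicity, each a pole of the resolvent $z\mapsto(z\bs{I}-T)^{-1}$; and since $M$ is a positive operator on a Banach lattice, $r(T)=1\in\sigma(T)$, so $z=1$ is one of these poles. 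Because $z=1$ is a genuine pole --- not merely a boundary spectral value --- the Krein--Rutman argument goes through: the cone $K$ of non-negative continuous functions has non-empty interior (the constant function $\mathbf{1}$ is an interior point), and the leading Laurent coefficient $C$ of $(z\bs{I}-T)^{-1}$ at $z=1$, being a limit of the positive operators $(z-1)^{p}(z\bs{I}-T)^{-1}$ as $z\downarrow1$ along the reals (with $p$ the pole order), is a non-zero positive operator whose range lies in the $1$-eigenspace of $T$; thus $\ell:=C\mathbf{1}$ satisfies $\ell\in K$ and $M\ell=r\ell$, and $\ell\neq0$ because a non-zero positive operator cannot vanish at an interior point of $K$.

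Next I would invoke primitivity. For every $b\in\mathcal{A}$ and every $n$, $(M^{n}\ell)(b)=\sum_{c\in\varrho^{n}_{\bs{a}}(b)}\ell(c)=r^{n}\ell(b)$, so if $\ell(b_0)=0$ for some $b_0$, then $\ell$ vanishes on every letter occurring in $\varrho^{n}_{\bs{a}}(b_0)$, for all $n$; by primitivity in the compact-alphabet sense recalled above, that set of letters is dense in $\mathcal{A}$, so continuity of $\ell$ forces $\ell\equiv0$, a contradiction, and hence $\ell>0$ everywhere. The same argument shows that \emph{any} $\ell'\in K\setminus\{0\}$ with $M\ell'=\lambda'\ell'$, $\lambda'\neq0$, is strictly positive. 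For such an $\ell'$, since $\mathcal{A}$ is compact and $\ell,\ell'$ are continuous and strictly positive, the ratio $\ell'(b)/\ell(b)$ attains its maximum $s>0$ at some $b^{*}$ and its minimum $u>0$ at some $b_{*}$; then $s\ell-\ell'$ and $\ell'-u\ell$ lie in $K$, so $M^{n}(s\ell-\ell')\geq0$ and $M^{n}(\ell'-u\ell)\geq0$, and evaluating these at $b^{*}$ and $b_{*}$ respectively yields $r^{n}\geq(\lambda')^{n}$ and $(\lambda')^{n}\geq r^{n}$ for all $n$, whence $\lambda'=r$. Consequently $\ell'-u\ell\in K$ is an $r$-eigenfunction vanishing at $b_{*}$, so $\ell'-u\ell\equiv0$ and $\ell'=u\ell$. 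This establishes existence, strict positivity, the eigenvalue $\lambda=r=r(M)$, and uniqueness up to scalar multiplication.

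Finally, $r(M)>1$: by~\eqref{eq:pre-subs} every image $\varrho^{ }_{\bs{a}}(b)$ has length at least $2$, so $(M^{n}\mathbf{1})(b)=|\varrho^{n}_{\bs{a}}(b)|\geq2^{n}$ for all $b$, whence $|M^{n}|^{ }_{\text{op}}\geq\|M^{n}\mathbf{1}\|_{\infty}\geq2^{n}$ and $r(M)=\lim_{n}|M^{n}|_{\text{op}}^{1/n}\geq2>1$. (In full generality one needs an additional standing ``expanding'' assumption, since a primitive substitution per se need not lengthen words; for the family treated here it is automatic.) I expect the main obstacle to be the Krein--Rutman step, because the textbook Krein--Rutman theorem requires compactness, which fails here: one must instead use the finer spectral theory of quasi-compact positive operators from \cite{HH,MRW2} --- the existence and structure of the pole at $z=1$ and positivity of the associated spectral projection --- and then transplant the Perron--Frobenius reasoning for primitive matrices (strict positivity, the eigenvalue being the spectral radius, and one-dimensionality of the eigenspace) to the infinite-dimensional setting.
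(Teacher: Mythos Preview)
The paper does not prove Theorem~\ref{thm:length-function}; it is quoted from \cite{MRW2} and stated without proof, so there is no in-paper argument to compare against. Your sketch is a correct outline of precisely the Krein--Rutman/Perron--Frobenius approach one expects in the quasi-compact setting: isolate $r(M)$ as a pole of the resolvent via $r_{\text{ess}}(T)<1$, extract a non-negative eigenfunction from the (positive) leading Laurent coefficient acting on $\mathbf{1}$, upgrade to strict positivity using primitivity and continuity, and then run the classical ratio argument to obtain $\lambda'=r$ and one-dimensionality of the positive eigencone.

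Two small points worth tightening. First, in the uniqueness step you silently use $\lambda'>0$; this is immediate once you have shown $\ell'>0$ (since $M\ell'\in K$ forces $\lambda'\ell'\in K$), but it should be stated, as otherwise the inequalities $r^{n}\geq(\lambda')^{n}$ are meaningless. Second, your argument for $r(M)>1$ invokes Eq.~\eqref{eq:pre-subs}, which is specific to the family $\varrho^{ }_{\bs a}$ and not part of the hypotheses of the theorem as stated; you already flag this, and indeed in \cite{MRW2} the conclusion $r(M)>1$ relies on a standing non-triviality assumption on $\varrho$ that the present paper leaves implicit. With those caveats, your proposal matches the intended proof strategy.
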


\begin{rem}\label{rem:quasi-compact}
Quasi-compactness has strong implications to the spectral properties of $M$. In particular, this is equivalent to the \emph{essential spectral radius} $r_{\text{ess}}(M)$ being strictly less than $r(M)$. Since it is not central to our arguments, we do not define $r_{\text{ess}}(M)$ here and refer the reader to \cite{Aksoy,MRW2,Pol} instead. This observation also implies that outside the essential spectral radius, there are at most finitely many other elements of $\sigma(M)$, each being an eigenvalue with finite-dimensional (generalised) eigenspace; see \cite[Ch.~XIV.1]{HH}. 
\end{rem}

It was shown in \cite[Sect.~4]{FGM} that the substitutions considered in Section~\ref{sec:def} satisfy the conditions in Theorem~\ref{thm:length-function}.
Moreover, we know closed forms for $\lambda$ and $\ell$ in terms of the defining sequence~$\bs{a}$. 

\begin{prop}[\cite{FGM}]  \label{prop:mulambda}
Let $\varrho^{ }_{\bs{a}}$ be a substitution on $\mathcal{A}=\mathbb{N}_0\cup \left\{\infty\right\}$ as defined in Eq.~\eqref{eq:pre-subs}. Let $\mu$ be the unique real number in $(0,1)$ that satisfies 
\[
\frac{1}{\mu}=\sum_{i=0}^{\infty}a_i\mu^i. 
\]
One then has 
\begin{equation}\label{eq:closed-form}
\lambda=\mu+\frac{1}{\mu}\quad \text{ and } \quad
\ell([k])=\mu^k+\sum_{j=1}^k\sum_{i=j}^\infty a_i\mu^{i+k+1-2j}, \text{ for } k>0,
\end{equation}
with the normalization $\ell([0])=1$.
\end{prop}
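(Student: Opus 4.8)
The plan is to exhibit the pair $(\lambda,\ell)$ explicitly, to verify that it is a strictly positive natural length function with $\lambda>1$, and then to invoke Theorem~\ref{thm:length-function} --- whose hypotheses hold for $\varrho_{\bs{a}}$, as recalled above from \cite[Sect.~4]{FGM} --- to conclude that it is \emph{the} unique one, so that in particular $\lambda=r(M)$. First one checks that $\mu$ is well defined: the power series $t\mapsto\sum_{i\geq0}a_it^{\,i+1}$ has non-negative coefficients and no constant term, so it is continuous and strictly increasing on $[0,1)$, vanishes at $t=0$, and tends to $+\infty$ as $t\to1^-$ because $a_i$ is eventually equal to $a>0$; hence there is a unique $\mu\in(0,1)$ with $\sum_{i\geq0}a_i\mu^i=1/\mu$, and one sets $\lambda:=\mu+1/\mu$, which is $>2$. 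Spelling out $M\ell=\lambda\ell$ for $\ell_k:=\ell([k])$ with the normalisation $\ell_0=1$ gives $\ell_1=\lambda-a_0$, the three-term recursion $\ell_{k+1}=\lambda\ell_k-\ell_{k-1}-a_k$ for $k\geq1$, and, from $\varrho_{\bs{a}}([\infty])=[0]^a[\infty][\infty]$, the value $\ell([\infty])=\tfrac{a}{\lambda-2}$.

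The closed form for $\ell_k$ is most naturally found via the generating functions $L(x)=\sum_{k\geq0}\ell_kx^k$ and $A(x)=\sum_{k\geq0}a_kx^k$: the recursion and the initial data are equivalent to
\[
L(x)\,\bigl(1-\lambda x+x^2\bigr)=1-xA(x).
\]
Since $1-\lambda x+x^2=(1-\mu x)(1-x/\mu)$ has the zero $x=\mu$ inside the unit disc, a power-series solution with radius of convergence at least $1$ requires $1-xA(x)$ to vanish at $x=\mu$, i.e.\ $\mu A(\mu)=1$, which is precisely the equation defining $\mu$; cancelling that factor and expanding the quotient into a power series yields the stated closed form. Rather than belabour this expansion, I would verify the formula directly: set $T_j:=\sum_{i\geq j}a_i\mu^{i+1}$, so $T_0=1$ by the $\mu$-equation and $T_j-T_{j+1}=a_j\mu^{j+1}$; the closed form is then $\ell_k=\sum_{j=0}^{k}\mu^{\,k-2j}T_j$, and a short telescoping computation gives $\lambda\ell_k-\ell_{k-1}-\ell_{k+1}=\mu^{-k-1}(T_k-T_{k+1})=a_k$ for $k\geq1$, which is the recursion; the cases $k=0$ and $\ell_1=\lambda-a_0$ follow at once from $\sum_{i\geq0}a_i\mu^i=1/\mu$.

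It remains to check the qualitative hypotheses of Theorem~\ref{thm:length-function}. Strict positivity is immediate: $\ell_0=1$, $\ell_k\geq\mu^k>0$ for $k\geq1$ because every summand in the closed form is non-negative, and $\ell([\infty])=\tfrac{a}{\lambda-2}>0$ since $\lambda>2$. Continuity of $\ell$ on $\mathcal{A}$ amounts to $\ell_k\to\ell([\infty])$; since $a_i$ is bounded one has $T_j\leq C_0\,\mu^{j}$ for a constant $C_0$, so $\mu^{\,k-2j}T_j\leq C_0\,\mu^{\,k-j}$ and $(\ell_k)$ is bounded, and once $a_k$ has stabilised to $a$ the difference $e_k:=\ell_k-\tfrac{a}{\lambda-2}$ satisfies the homogeneous recursion $e_{k+1}=\lambda e_k-e_{k-1}$, whose only bounded solutions are multiples of $\mu^k$; therefore $e_k\to0$. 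As $\lambda>1$, Theorem~\ref{thm:length-function} applies and the natural length function is unique up to scaling, with eigenvalue $r(M)>1$; our $\ell$ --- a strictly positive such eigenvector with eigenvalue $\lambda$, normalised by $\ell([0])=1$ --- must therefore be it, so $\lambda=r(M)$. I expect the only genuine subtleties to be the continuity at $\infty$ (the convergence $\ell_k\to\ell([\infty])$, for which the eventually homogeneous recursion is the crux) and the observation that the equation defining $\mu$ is exactly the condition under which the three-term recursion admits a bounded solution; the rest is bookkeeping.
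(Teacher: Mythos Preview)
Your proof is correct and self-contained. Note, however, that the paper does not actually prove this proposition: it is quoted from \cite{FGM} and stated without argument, so there is no ``paper's proof'' to compare against. What you have written is a clean verification that the displayed $\ell$ is indeed a strictly positive eigenvector of $M$ with eigenvalue $\mu+1/\mu$, followed by an appeal to Theorem~\ref{thm:length-function} for uniqueness. The generating-function derivation of the closed form and the telescoping check via $T_j=\sum_{i\geq j}a_i\mu^{i+1}$ are both valid; the continuity argument at $\infty$ (reducing to the homogeneous recursion once $a_k$ has stabilised, and using boundedness to kill the $\mu^{-k}$ mode) is the right way to handle that point. One minor remark: your boundedness estimate $T_j\le C_0\mu^j$ already suffices to show $\ell_k$ is bounded, but it does not by itself identify the limit; the subsequent homogeneous-recursion argument is what actually gives $\ell_k\to a/(\lambda-2)$, so the two steps are not redundant.
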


It is easy to see that the substitution defined in Eq.~\eqref{eq:pre-subs} admits a bi-infinite tiling fixed point $\mathcal{T}$ with seed $\infty|0$. From this substitution tiling, one can derive a  
Delone set $\varLambda_{\mathcal{T}}$ by collapsing each tile to the location of its left endpoint. Unique ergodicity implies that the points in $\varLambda_{\mathcal{T}}$ admit a well-defined frequency. This leads us to the following result; compare \cite[Prop.~4.5]{FGM}. 

\begin{prop}\label{prop:freqdens}
Let $\varLambda_{\mathcal{T}}$ be the Delone set derived from the tiling fixed point of $\varrho^{ }_{\bs{a}}$. Then, the frequency of the point of type $[k]$ in $\varLambda_{\mathcal{T}}$ is given by $\nu([k])=(1-\mu)\mu^k$. Moreover, the density of $\varLambda_{\mathcal{T}}$ exists and is given by\ $\dens(\varLambda_{\mathcal{T}})=\big(\sum_{k=0}^{\infty}\nu([k])\ell([k])\big)^{-1}$.
\end{prop}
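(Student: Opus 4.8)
The plan is to treat the two assertions separately, using the spectral structure of the substitution operator $M$ that is already available for this family from \cite{FGM,MRW2}; below, $M^{*}$ denotes the adjoint of $M$ on the space of finite Radon measures (the dual of $C(\mathcal A)$, since $\mathcal A$ is compact), and for a measure $\varphi$ and $f\in C(\mathcal A)$ I write $\langle\varphi,f\rangle=\int_{\mathcal A}f\,\mathrm{d}\varphi$. For the frequencies, I would recall from \cite{FGM,MRW2} that, because $\varrho^{ }_{\bs a}$ is primitive with $\tfrac1\lambda M$ quasi-compact, tile frequencies exist and are encoded by the unique probability measure $\nu$ on $\mathcal A$ satisfying the left-eigenvector equation $M^{*}\nu=\lambda\nu$. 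It then suffices to check that $\nu([k])=(1-\mu)\mu^{k}$ for $k\in\mathbb{N}_{0}$, together with $\nu(\{\infty\})=0$, is precisely this measure. That it is a probability measure is immediate from $\sum_{k\geq0}(1-\mu)\mu^{k}=1$ (recall $\mu\in(0,1)$), so in particular no frequency escapes to the point $\infty$. For the eigenvalue equation I would read off from \eqref{eq:pre-subs} the multiplicity of each letter across the images of $\varrho^{ }_{\bs a}$: the letter $[0]$ appears $a_{b}$ times in $\varrho^{ }_{\bs a}([b])$ for every $b$, with one extra appearance in $\varrho^{ }_{\bs a}([1])$; and for $k\geq1$ the letter $[k]$ appears exactly once in each of $\varrho^{ }_{\bs a}([k-1])$ and $\varrho^{ }_{\bs a}([k+1])$ and in no other image. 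Thus $M^{*}\nu=\lambda\nu$ is equivalent to the two scalar identities $\lambda\nu_{0}=\sum_{i\geq0}a_{i}\nu_{i}+\nu_{1}$ and $\lambda\nu_{k}=\nu_{k-1}+\nu_{k+1}$ for $k\geq1$. Substituting $\nu_{k}=(1-\mu)\mu^{k}$ and using $\sum_{i\geq0}a_{i}\mu^{i}=\tfrac1\mu$ and $\lambda=\mu+\tfrac1\mu$ from Proposition~\ref{prop:mulambda}, both collapse to elementary algebraic identities, and uniqueness of the Perron eigenmeasure identifies $\nu$ with the frequency.

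For the density, I would use minimality and unique ergodicity of $(X_{\varrho},S)$ to evaluate $\dens(\varLambda_{\mathcal T})$ as the limit of point counts along the nested supertiles $\varrho^{n}_{\bs a}([0])$. Since $\varLambda_{\mathcal T}$ carries one point per tile, the patch $\varrho^{n}_{\bs a}([0])$ contains exactly $(M^{n}\mathbf{1})([0])$ points, where $\mathbf{1}$ is the constant function $1$ on $\mathcal A$, and it has geometric length $(M^{n}\ell)([0])=\lambda^{n}\ell([0])=\lambda^{n}$; hence $\dens(\varLambda_{\mathcal T})=\lim_{n\to\infty}\lambda^{-n}(M^{n}\mathbf{1})([0])$. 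I would then invoke the spectral decomposition of $M$: since $\lambda=r(M)$ is a simple isolated eigenvalue and the only one of modulus $\lambda$, one has $\lambda^{-n}M^{n}\to\Pi$ in operator norm, where $\Pi f=\langle\nu,\ell\rangle^{-1}\langle\nu,f\rangle\,\ell$ is the rank-one spectral projection onto $\C\ell$. Applying $\Pi$ to $f=\mathbf{1}$ and evaluating at $[0]$, and using $\langle\nu,\mathbf{1}\rangle=\sum_{k\geq0}\nu([k])=1$, $\ell([0])=1$, and $\nu(\{\infty\})=0$, gives $\lambda^{-n}(M^{n}\mathbf{1})([0])\to\ell([0])\langle\nu,\ell\rangle^{-1}=\big(\sum_{k\geq0}\nu([k])\ell([k])\big)^{-1}$, which is the claimed identity. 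The series converges and is positive because $\ell\in C(\mathcal A)$ is bounded on the compact space $\mathcal A$, $\nu$ is a probability measure, and both $\nu$ and $\ell$ are strictly positive on $\mathbb{N}_{0}$.

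The only step that requires more than bookkeeping is the convergence $\lambda^{-n}M^{n}\to\Pi$ to a \emph{rank-one} projection, equivalently the fact that the peripheral spectrum of $M$ reduces to the single simple eigenvalue $\lambda$. This is exactly where quasi-compactness of $\tfrac1\lambda M$ (verified for this family in \cite[Sect.~4]{FGM}) and primitivity are used: Remark~\ref{rem:quasi-compact} already records that outside the essential spectral radius the spectrum is finite with finite-dimensional generalised eigenspaces, and primitivity---through Perron--Frobenius theory for positive quasi-compact operators, as in the proof of Theorem~\ref{thm:length-function}---upgrades this to simplicity of $\lambda$ and the absence of other eigenvalues of modulus $\lambda$. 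A minor secondary point is the legitimacy of computing the density along the supertile exhaustion rather than along arbitrary growing intervals, which is a standard consequence of minimality together with unique ergodicity.
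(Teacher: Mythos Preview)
The paper does not give an in-text proof of this proposition; it is stated with a pointer to \cite[Prop.~4.5]{FGM} and then used as input for the rest of the paper. Your proposal is therefore not comparable line-by-line, but it is a correct and natural reconstruction of the argument behind that citation: you verify directly that the candidate measure $\nu_k=(1-\mu)\mu^k$ is the unique Perron eigenmeasure of $M^{*}$ (the two scalar identities you write down are exactly the column relations of the substitution, and they do reduce to $\lambda=\mu+\tfrac1\mu$ and $\tfrac1\mu=\sum a_i\mu^i$), and you obtain the density from the rank-one spectral projection $\lambda^{-n}M^{n}\to\Pi$ applied to $\mathbf{1}$ and evaluated at $[0]$. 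Both steps are sound, and you correctly flag that the only substantive input is primitivity plus quasi-compactness of $\tfrac1\lambda M$, which is precisely what \cite{FGM} supplies for this family.

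One small point worth making explicit in your write-up: when you verify $M^{*}\nu=\lambda\nu$ componentwise you are implicitly using that $\nu(\{\infty\})=0$ and that $[\infty]$ never occurs in $\varrho^{ }_{\bs a}([k])$ for finite $k$, so the coefficient of $f([\infty])$ vanishes on both sides automatically. This is true but deserves a sentence, since $\mathcal{A}$ is a compactification and a priori mass could sit at $\infty$.
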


\begin{figure}
    \centering
    \includegraphics[width=.7\textwidth]{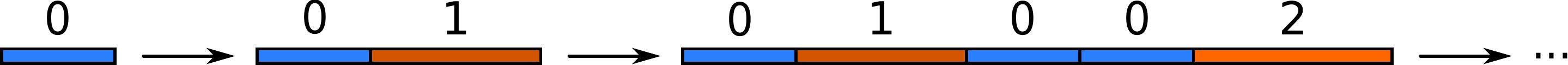}
    \caption{The $\varrho^{ }_{\bs{a}}$ of Example \ref{ex:all-1} applied to the tile $[0]$
    (a unit interval): $[0]$ is inflated by $\lambda=\frac{5}{2}$ and subdivided into tiles
    $[0]$ and $[1]$. This is the first order supertile $\varrho^{ }_{\bs{a}}([0])$. In
    the next step $\varrho^{ }_{\bs{a}}$ is applied to the two tiles of the first order
    supertile. This yields the second order supertile $\varrho^2_{\bs{a}}([0])$ consisting
    of five tiles.}
    \label{fig:bsp1111}
\end{figure}

\begin{exam}\label{ex:all-1}
Consider the sequence $\bs{a}$ with $a_i=1$ for all $i$. The corresponding substitution $\varrho^{ }_{\bs{a}}$ has $\mu=\sfrac 12$ and hence $\lambda=\sfrac 52$, $\ell([k])=2-\sfrac{1}{2^k}$ and $\nu([k])=\sfrac{1}{2^{k+1}}$;
compare Figure \ref{fig:bsp1111}. The average distance between two points in $\varLambda_{\mathcal{T}}$ is given by $(\dens(\varLambda_{\mathcal{T}}))^{-1}=\sfrac43$.  
\end{exam}

\subsection{Spectral gap, discrepancies, and bounded distance equivalence}

Apart from the existence of a strictly positive length function and unique ergodicity,
quasi-compactness (together with primitivity) has direct implications to studying discrepancies. 
It also implies that the operator $M$ has a \emph{spectral gap}, i.e., it admits a ``second largest'' element (in terms of modulus). More formally, the quantity 
\[
r_2=:\sup\left\{|\lambda^{\prime}|\colon \lambda^{\prime}\in \sigma(M)\setminus\left\{\lambda\right\}\right\}
\]
exists and is strictly less than the inflation factor $\lambda$; see \cite[Sec.~7]{MRW}. 

\begin{rem}
Not all primitive substitutions over a compact alphabet admit a spectral gap; see \cite[Ex.~6.18]{MRW} for an example for which $M$ is not quasi-compact.
\end{rem}

Let $f\in C(\mathcal{A})$ with $\|f\|\leqslant 1$. We define $\text{Act}(f,b,n):=M^{n}f(b)=\sum _{c\in\varrho^{n}(b)} f(c)$  and $\text{Exp}(f,b,n)=\lambda^n\cdot \ell(b) \dens(\varLambda)$.
(Later we will consider $f(b)=1$ for all $b \in \mathcal{A}$. Then  
$\text{Act}(f,b,n)$ is the actual number of tiles in $\varrho^n_{\bs{a}}(b)$, and $\text{Exp}(f,b,n)$ is the expected number of tiles in $\varrho^n_{\bs{a}}(b)$.)
The following discrepancy estimate follows from \cite[Thm.~7.3]{MRW}. 

\begin{thm}\label{thm:discr-gen}
Let $\varrho$ be a primitive substitution on a compact alphabet with quasi-compact substitution operator $M$. Then for any $f\in C(\mathcal{A})$ with $\|f\|\leqslant 1$, there exists a function $\theta\colon \mathbb{N}\to \mathbb{R}_{+}$ with $\lim_{n\to\infty} \sqrt[n]{\theta(n)}=1$ such that 
\begin{equation*}
|\textnormal{Exp}(f,b,n)-\textnormal{Act}(f,b,n)|\leqslant \theta(n)(r_2)^n. 
\end{equation*}
\end{thm}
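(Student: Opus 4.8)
The plan is to deduce this from the general quasi-compactness machinery already cited as \cite[Thm.~7.3]{MRW}. First I would recall the spectral decomposition of $M$ afforded by quasi-compactness together with primitivity (Remark~\ref{rem:quasi-compact}): on $C(\mathcal{A})$ one can split the space as $E = E_\lambda \oplus E_{\text{rest}}$, where $E_\lambda$ is the one-dimensional eigenspace spanned by the (strictly positive) right eigenfunction $\ell$ associated with the simple dominant eigenvalue $\lambda = r(M)$, and $E_{\text{rest}}$ is the closed $M$-invariant complement on which the spectral radius of the restriction equals $r_2 < \lambda$. Primitivity guarantees simplicity and positivity of $\lambda$ (an analogue of Perron--Frobenius), and quasi-compactness guarantees that outside $E_\lambda$ the spectrum sits inside the disc of radius $r_2$, with only finitely many eigenvalues of finite multiplicity in the annulus between $r_{\text{ess}}(M)$ and $r(M)$.

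Next I would write $f = \beta\,\ell + g$ with $\beta \in \mathbb{R}$ and $g \in E_{\text{rest}}$, where the scalar $\beta$ is given by the dual eigenfunctional (the left eigenvector, i.e.\ the frequency/ergodic measure) evaluated at $f$; normalising correctly, the projection onto $E_\lambda$ produces exactly $\mathrm{Exp}(f,b,n) = \lambda^n \ell(b)\dens(\varLambda)$ after identifying $\beta$ with $\dens(\varLambda)$ times the appropriate pairing (this identification is where Proposition~\ref{prop:freqdens} and the normalisation $\ell([0])=1$ enter). Then
\[
\mathrm{Act}(f,b,n) - \mathrm{Exp}(f,b,n) = (M^n g)(b),
\]
and I need to bound $\|M^n g\|_\infty$. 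Since $g$ lies in the part of the space where the spectral radius is $r_2$, the spectral radius formula gives $\limsup_n \|M^n|_{E_{\text{rest}}}\|_{\text{op}}^{1/n} = r_2$; hence for the operator norm $\|M^n|_{E_{\text{rest}}}\|_{\text{op}} = \theta_0(n)\,(r_2)^n$ with $\theta_0(n)^{1/n}\to 1$. Setting $\theta(n) := \theta_0(n)\,\|g\|_\infty$ (and absorbing the bound $\|f\|\le 1$, which controls $\|g\|_\infty$ by a fixed constant coming from the norm of the spectral projection), one obtains $|(M^ng)(b)| \le \theta(n)(r_2)^n$ for all $b$, with $\sqrt[n]{\theta(n)}\to 1$. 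If the eigenvalue of modulus $r_2$ is semisimple one may even take $\theta$ bounded, but the statement only asks for the subexponential bound, so I would not pursue that refinement.

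The main obstacle is purely bookkeeping: making the identification $\mathrm{Exp}(f,b,n) = \lambda^n\ell(b)\dens(\varLambda)$ match the $E_\lambda$-component of $M^nf$ on the nose, which requires pinning down the exact normalisations of the left eigenfunctional, of $\ell$, and of $\dens(\varLambda)$ so that the dual pairing $\langle \nu, f\rangle$ collapses to $\dens(\varLambda)$ when $f\equiv 1$ — this is precisely the content linking Proposition~\ref{prop:freqdens} to the spectral projection. Everything else (the existence of the gap $r_2 < \lambda$, finiteness of the peripheral spectrum, the spectral-radius asymptotics giving the subexponential prefactor $\theta$) is immediate from quasi-compactness and is exactly \cite[Thm.~7.3]{MRW}; I would simply cite it rather than reprove the functional-analytic core.
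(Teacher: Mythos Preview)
The paper does not prove this theorem at all: it simply records it as a consequence of \cite[Thm.~7.3]{MRW} and moves on. Your proposal is therefore more than what the paper provides; your spectral-decomposition sketch (split off the one-dimensional $\lambda$-eigenspace via the Perron-type projection, then bound $M^n$ on the complement using the spectral radius formula to get the subexponential prefactor) is the correct outline of how the cited result is proved, and you rightly conclude by citing \cite[Thm.~7.3]{MRW} for the functional-analytic core, which is exactly what the paper does.
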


\begin{rem}
Quasi-compact operators are ubiquitous in the study of dynamical systems (both in the discrete and continuous regimes). In particular, the quasi-compactness of the Ruelle--Perron--Frobenius transfer operator allows one to estimate rates of mixing and decay of correlations; see the seminal works by Ruelle and Pollicott \cite{Pol,Ruelle}. The (isolated) eigenvalues lying strictly between the peripheral spectrum (i.e., the intersection of $\sigma(M)$ with the circle of radius $r(M)$) and the essential spectrum figure in expansions similar to that in Theorem~\ref{thm:discr-gen}. These eigenvalues are also known as \emph{Ruelle--Pollicott resonances}, which find a variety of applications including fractal geometry and stochastic differential equations; see \cite{KK} for instance. In our setting, the peripheral spectrum consists solely of $\lambda$, and the other isolated eigenvalues of Ruelle--Pollicott-type are the ones which show up in our discrepancy estimates; see Example~\ref{ex:extra-eigen} below. 
\end{rem}

In this work, we will focus on the tile counting function $f$ with $f(b)=1$ for all $b\in\mathcal{A}$. Note that $(M^nf)(b)$ counts the number of tiles in $\varrho_{\bs{a}}^{n}(b)$. We will restrict ourselves to $n$-th order supertiles of type $[0]$, hence we fix $b=[0]$. For eventually constant sequences, we give certain bounds for $r_2$ and the function $\theta(n)$, which are related to Catalan numbers.

\section{Catalan numbers as discrepancies} \label{sec:catalan}

In this section, we completely describe the tile counting function and discrepancies for the substitution in Example~\ref{ex:all-1} where $a_i=1$ for all $i\geq 0$. The sequence serves as a model example since it is the first and probably one of the simplest examples where the discrepancy function shows Catalan-like growth.

For this sequence, recall from \cite{MRW} that we have the following pre-substitution on letters $[i],i\in \N$.

\begin{align*}
\varrho^{ }_{\bs a}([0]) & =[0][1], \text{ and}\\
\varrho^{ }_{\bs a}([i]) & =[0][i-1][i+1] \text{ for }i>0.
\end{align*}

To this pre-substitution we associate the corresponding infinite substitution matrix 
$${\bs A}=\left( 
\begin{array}{cccccc}
1 & 2 & 1 & 1 & 1 & \dots\\
1 & 0 & 1 & 0 & 0 & \ldots\\
0 & 1 & 0 & 1 & 0 & \ldots\\
0 & 0 & 1 & 0 & 1 & \ldots\\
0 & 0 & 0 & 1 & 0 & \ldots\\
\vdots & \vdots & \vdots & \vdots & \vdots & \ddots
\end{array}
\right).$$

One can view $\bs{A}$ as the transpose of the substitution operator $M$ restricted on the subspace associated to the isolated points in $\mathcal{A}$. 

We will estimate the {\it tile counting function} 
$$\#_{\bs a}(n):=(1,1,1,\ldots) \bs{A}^n (1,0,0,\ldots)^t=(M^{n}f)([0]),$$
which returns the total number of letters in $\varrho^n_{{\bs a}}([0])$. (Recall that we consider 
$\varrho^{ }_{\bs{a}}(b)$ as a multiset.) Note that despite using infinite vectors and an infinite matrix $\bs{A}$, the product is well defined because the product of the left vector with any power of $\bs{A}$ will give a vector with finitely many non-zero entries (since each column of $\bs{A}^n$ has only finitely many non-zero entries). In order to make formulas shorter we denote by $[\bs v]_i$ the $i$-th term of a vector $\bs v$. We are interested in $[(1,1,1,\ldots) \bs{A}^n]_0$ (we will start indexing from $0$, since the initial entry corresponds to $[0]$).

The coefficient of $\lambda^n$ in the formula for $[(1,1,1,\ldots) \bs{A}^n]_0$ can be estimated from the lengths of tiles and their frequencies, see \cite{MRW} or \cite{FGM}; compare also \cite{BG} for the finite alphabet case. In the particular case of this substitution, the average tile length is $\sfrac 43$ from Example~\ref{ex:all-1}. 

Since the inflation factor for this substitution is $\sfrac52$, the length of $\rho^n_{\bs{a}}([0])$ is $(\sfrac 52)^n$, and this iteration is expected to have $\sfrac 34\cdot (\sfrac 52)^n$ tiles. Thus we write 
$$[(1,1,1,\ldots) \bs{A}^n]_0=\frac 34 \cdot \left(\frac52\right)^n+d_{\bs a}(n)$$
and estimate the {\it discrepancy} term $d_{\bs a}(n)$ for all $n\geq 0$.

Let $\bs{I}$ be the infinite identity matrix. Then, by elementary calculation, $(-1,1,1,\ldots)=(1,1,1,\ldots)(2\bs{A}-5\bs{I})$. We define a new function $D(n)$ as
\begin{equation}\label{eq:D-function}
D(n):=[(-1,1,1,\ldots)\bs{A}^n]_0=2d_{\bs a}(n+1)-5d_{\bs a}(n).
\end{equation}

\begin{lem}\label{lem:catalan} 
Let $C_k$ be the $k$-th Catalan number, $C_k={2k \choose k}-{2k \choose k+1}=\frac{{2k \choose k}}{k+1}$. Then, 
$$D(n)=\left\{
\begin{array}{rl}
-C_k& \text{ if } n=2k,\\
0& \text{ if } n=2k+1.
\end{array}\right.
$$
\end{lem}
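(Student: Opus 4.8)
The plan is to track the vector $\bs{v}_n := (-1,1,1,1,\ldots)\bs{A}^n$ and find a closed form, or at least a usable description, of its entries. Write $[\bs{v}_n]_i = v_n(i)$ for $i \geq 0$. The recursion $\bs{v}_{n+1} = \bs{v}_n \bs{A}$ translates, using the columns of $\bs{A}$, into
\begin{align*}
v_{n+1}(0) &= v_n(0) + v_n(1),\\
v_{n+1}(1) &= 2v_n(0) + v_n(2),\\
v_{n+1}(i) &= v_n(i-1) + v_n(i+1) \quad \text{for } i \geq 2.
\end{align*}
The initial vector is $\bs{v}_0 = (-1,1,1,1,\ldots)$. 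The recursion for $i\ge 2$ is exactly the transfer operation for a walk on $\N_0$, so the natural idea is to compare $\bs{v}_n$ with the vector obtained from the ``pure path'' recursion $w_{n+1}(i) = w_n(i-1) + w_n(i+1)$ for \emph{all} $i\ge 1$ with the boundary/left-edge modified. In fact I expect the cleanest route is to split $\bs{v}_0$ as $(-1,1,1,\ldots) = -2\,e_0 + (1,1,1,\ldots)$, where $e_0=(1,0,0,\ldots)$, and note that $(1,1,1,\ldots)$ is essentially the length-type data; but actually the sharper observation is that $(1,1,1,\ldots)\bs{A}^n$ is the tile-counting vector, whose $0$-entry we already split off as $\tfrac34(\tfrac52)^n + d_{\bs a}(n)$, so the interesting content is entirely carried by the ``defect'' part.

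Rather than going through $d_{\bs a}$, I would attack $D(n) = v_n(0)$ directly by a generating-function argument in the path variable. Introduce $F_n(x) = \sum_{i\ge 0} v_n(i) x^i$, a formal power series in $x$ (well-defined because each $\bs{v}_n$ has finitely many nonzero entries). The recursion for $i \geq 2$ says that $F_{n+1}(x)$ agrees, in degrees $\geq 2$, with $x F_n(x) + x^{-1}(F_n(x) - v_n(0) - v_n(1)x)$; the low-degree corrections from the first two rows of $\bs{A}$ (the $+v_n(1)$ in the $0$-entry and the $2v_n(0)$ in the $1$-entry, versus what the path recursion would give) are bookkeeping that produces a correction supported in degrees $0$ and $1$. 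Setting up $F_{n+1}(x) = (x + x^{-1})F_n(x) + (\text{explicit correction polynomial of degree} \le 1 \text{ in } x)$ and then substituting $x = \tfrac12$ — the value with $x + x^{-1} = \tfrac52 = \lambda$ — should make the homogeneous part collapse nicely and isolate $D(n)=v_n(0)$ after one extracts the constant term. The Catalan numbers enter precisely because the number of ways a path from the left edge returns to position $0$ after $2k$ steps, weighted appropriately, is counted by $C_k$: the series $\tfrac{1 - \sqrt{1-4x}}{2x} = \sum C_k x^k$ is the generating function for Dyck-type returns, and $x = \tfrac4{25}$ corresponds to $(\tfrac12/\tfrac52)^2$, i.e. the square of $\mu/\lambda$, which is exactly the combination that appears when one normalizes the path recursion by $\lambda$.

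Concretely, the key steps in order: (1) write out the entrywise recursion for $\bs{v}_n$ as above and confirm the initial condition; (2) prove by induction that $v_n(i) = 0$ whenever $i > n$, and more precisely pin down the parity — $v_n(0) = 0$ for $n$ odd — by the parity of path lengths (a step from $i$ changes $i$ by $\pm1$, so $v_n(0)$ only receives contributions of total length $n$ with $n$ even); (3) set up the generating function $F_n(x)$ and derive the one-step identity including the degree-$\le1$ correction terms coming from the anomalous first two rows of $\bs{A}$; (4) solve or iterate this to express $v_n(0)$ as a coefficient extraction, and identify it with $-C_k$ for $n = 2k$ by matching against the known expansion of $\tfrac{1-\sqrt{1-4x}}{2x}$, or alternatively by exhibiting an explicit bijection/ballot-type identity showing the relevant signed count equals $-\left(\binom{2k}{k} - \binom{2k}{k+1}\right)$.

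The main obstacle I anticipate is handling the left boundary cleanly: the first two rows of $\bs{A}$ are \emph{not} the path-recursion rows (there is a $2$ in position $(1,0)$ and the top row has entries $1,2,1,1,\ldots$), so the naive ``random walk / Catalan'' picture needs a careful correction, and it is this correction that must conspire to turn ordinary lattice-path counts into Catalan numbers with the right sign and no extra terms. I would therefore spend the bulk of the effort verifying that the boundary correction polynomial is exactly what is needed — most likely by computing $\bs{v}_0, \bs{v}_1, \bs{v}_2, \bs{v}_3$ by hand to fix the pattern (one should see $v_0(0) = -1 = -C_0$, $v_1(0) = 0$, $v_2(0) = -2$? — check: $v_1 = (0, -2+1, \ldots) $ hmm, I would recompute carefully), and then proving the general identity by induction on $k$ using the two-step map $\bs{v}_n \mapsto \bs{v}_{n+2}$, which is where the quadratic relation $C_{k+1} = \sum_{j} C_j C_{k-j}$ (or the simpler recurrence $(k+2)C_{k+1} = (4k+2)C_k$) naturally emerges from composing the transfer operation with itself. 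If the direct induction proves messy, the fallback is to diagonalize the path part via the substitution $x = \cos\vartheta$ (Chebyshev/Joukowski), turning $x + x^{-1}$ into a genuine eigenvalue parameter and reducing everything to a contour-integral extraction of $C_k$.
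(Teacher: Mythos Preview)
Your recursion for $i\geq 2$ is wrong: column $j$ of $\bs{A}$ (for $j\geq 2$) has a $1$ in row $0$ as well, since the top row is $1,2,1,1,1,\ldots$. The correct relation is
\[
v_{n+1}(i)=v_n(0)+v_n(i-1)+v_n(i+1)\qquad(i\geq 2),
\]
so the very quantity $D(n)=v_n(0)$ you are after feeds back into \emph{every} coordinate, not just the boundary ones. This breaks the ``pure path'' picture on which steps (2)--(4) rely: the parity argument no longer applies, and the claim that each $\bs{v}_n$ has finitely many nonzero entries is false already for $n=0$, since $\bs{v}_0=(-1,1,1,1,\ldots)$ is eventually constant, not eventually zero (and one checks $\bs{v}_1=(0,-1,1,1,\ldots)$, $\bs{v}_2=(-1,1,0,1,1,\ldots)$, etc.). Your generating function $F_n(x)$ is therefore not a polynomial, and the one-step identity you sketch becomes circular: the ``correction polynomial of degree $\leq 1$'' involves $v_n(0)$ itself.

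The missing idea is a change of basis that absorbs this feedback term. The paper works in the space of stabilising sequences orthogonal to $(1,\tfrac12,\tfrac14,\ldots)$ and takes $\bs{e}_0=(-1,1,1,\ldots)$ together with $\bs{e}_i=(0,\ldots,0,1,-2,0,\ldots)$ for $i\geq 1$. In \emph{that} basis, right multiplication by $\bs{A}$ acts by $\bs{e}_0\mapsto\bs{e}_0+\bs{e}_1$ and $\bs{e}_i\mapsto\bs{e}_{i-1}+\bs{e}_{i+1}$ --- precisely the clean path recursion you were hoping for, now with no stray $v_n(0)$. A short induction gives $\bs{B}^n(\bs{e}_0)=\sum_i c_{i,n}\bs{e}_i$ with the $c_{i,n}$ equal to the binomial coefficients $\binom{n}{j}$ listed in decreasing order; since only $\bs{e}_0$ and $\bs{e}_1$ have nonzero first entry, one reads off $D(n)=-c_{0,n}+c_{1,n}$, which is $\binom{n}{k-1}-\binom{n}{k}=-C_k$ for $n=2k$ and $0$ for $n$ odd. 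Your ballot/Dyck intuition was correct, but it only becomes visible after this basis change.
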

\begin{proof}
Let $\mathcal V$ be the space of all stabilising sequences ${\bs x}=(x_0,x_1,x_2,\ldots)$ of real numbers such that ${\displaystyle \sum_{i=0}^\infty\frac{x_i}{2^i}={\bs x} \cdot \left(1,\frac12,\frac14,\ldots\right)^t=0}$. The space $\mathcal V$ is invariant under the operator $\bs{B}$ defined by the right multiplication by $\bs{A}$. Indeed, $\bs{A} \left(1,\frac12,\frac14,\ldots\right)^t=\frac52\left(1,\frac12,\frac14,\ldots\right)^t$ and therefore if ${\bs x}\cdot \left(1,\frac12,\frac14,\ldots\right)^t=0$, then 
$$({\bs x}\bs{A})\cdot \left(1,\frac12,\frac14,\ldots\right)^t=\frac 52 \, {\bs x}\cdot \left(1,\frac12,\frac14,\ldots\right)^t=0.$$

Let ${\bs e}_0=(-1,1,1,\ldots)$ and for every $i>0$ we define ${\bs e}_i$ as follows
$$
\begin{array}{r}
{\bs e}_1=(1,-2,0,0,0,0,\ldots),\\
{\bs e}_2=(0,1,-2,0,0,0,\ldots),\\
{\bs e}_3=(0,0,1,-2,0,0,\ldots),
\end{array}$$
and so on. Then $\mathcal E=\{{\bs e}_i\}_i$ is a basis for $\mathcal V$: every vector in $\mathcal V$ is stabilising and it can be written as a multiple of ${\bs e}_0$ plus a finite linear combination of vectors from $\mathcal E$.

Simple computations show that $\bs{B}({\bs e}_0)={\bs e}_0+{\bs e}_1$ and  $\bs{B}({\bs e}_i)={\bs e}_{i-1}+{\bs e}_{i+1}$ for $i>0$. In other words, the matrix of the restriction of $\bs{B}$ on $\mathcal V$ in $\mathcal E$ is 
$${\bs B'}=\left( 
\begin{array}{cccccc}
1 & 1 & 0 & 0 & 0 & \dots\\
1 & 0 & 1 & 0 & 0 & \ldots\\
0 & 1 & 0 & 1 & 0 & \ldots\\
0 & 0 & 1 & 0 & 1 & \ldots\\
0 & 0 & 0 & 1 & 0 & \ldots\\
\vdots & \vdots & \vdots & \vdots & \vdots & \ddots
\end{array}
\right).$$

We claim that $\bs{B}^n({\bs e}_0)=c_{0,n}{\bs e}_0+ \cdots +c_{n,n}{\bs e}_n$, where the list $(c_{0,n}, \ldots,c_{n,n})$ is the list of binomial coefficients $\binom{n}{i}$, sorted from the largest one to the smallest one. That is, for $n$ even the list goes $\binom{n}{\sfrac{n}{2}}, \binom{n}{\sfrac{n}{2}-1}, \binom{n}{\sfrac{n}{2}+1}, \binom{n}{\sfrac{n}{2}-2}, \ldots, {n \choose 0}, {n \choose n}$; 
and for $n$ odd analogously.

This can be shown by in induction. The basis $n=0$ is trivial because ${\bs B}({\bs e}_0)={\bs e_0}=c_{0,0}{\bs e_0}$. For the induction step we write 
\begin{multline*}
    {\bs B}^{n+1}({\bs e_0})={\bs B}(c_{0,n}{\bs e}_0+ \cdots +c_{n,n}{\bs e}_n)=\\=(c_{0,n}+c_{1,n}){\bs e_0}+(c_{0,n}+c_{2,n}){\bs e_1}+(c_{1,n}+c_{3,n}){\bs e_2}+\cdots+(c_{n-2,n}+c_{n,n}){\bs e_{n-1}}+\\+c_{n-1,n}{\bs e_{n}}+c_{n,n}{\bs e_{n+1}}.
\end{multline*}

Note that the coefficients of ${\bs e}_n$ and ${\bs e_{n+1}}$ are $c_{n-1}=c_{n,n}=1=c_{n,n+1}=c_{n+1,n+1}$ as claimed. The remaining coefficients are sums of two binomial coefficients and we use the identity ${n \choose i}+{n\choose i+1}={n+1 \choose i+1}$ in each case. Note that $c_{i,n}$ is equal to $c_{i-1,n}$ or $c_{i+1,n}$ depending on the parities of $i$ and $n$ so the identity can be used for each coefficient.

Since only ${\bs e}_0$ and ${\bs e}_1$ have non-zero starting entry, 
$$[(-1,1,1,\ldots)\bs{A}^n]_0
=[\bs{B}^n({\bs e}_0)]_0=-c_{0,n}+c_{1,n}.$$
 The right-hand side is the difference between the second-to-the-largest and largest binomial coefficients which is either 0 if $n$ is odd, or $-C_k$ if $n$ is even.
\end{proof}

Since $d_{\bs a}(0)=\frac 14$ and $d_{\bs a}(n+1)=\frac52d_{\bs a}(n)+\frac12 D(n)$ from Eq.~\eqref{eq:D-function}, we derive the following explicit formula for $d_{\bs a}(n)$ for $n>0$:

$$
d_{\bs a}(n)=\left\{
\begin{array}{rl}
\frac12\left(\frac52\right)^{2k}\left(\frac 54-C_0-\left(\frac25\right)^2C_1-\ldots-\left(\frac25\right)^{2k}C_k\right),& \text{ if } n=2k+1,\\
\frac52d_{\bs a}(n-1),& \text{ if } n=2k.
\end{array}\right.
$$

Next, we use the power series for the Catalan numbers to obtain precise asymptotics for $d_{\bs{a}}(n)$. In particular,
$$C(x)=\frac{1-\sqrt{1-4x}}{2x}=\sum_{i=0}^\infty C_ix^i,$$
whose radius of convergence is $\sfrac14$.  Plugging $x=\frac{4}{25}$ into this series yields $C(\sfrac{4}{25})=\sfrac{5}{4}$, from which we get the representation of $d_{\bs a}(2k+1)$ as the tail
of this series:
$$d_{\bs a}(2k+1)=\frac12\left(\frac52\right)^{2k}\sum_{i=k+1}^\infty\left(\frac25\right)^{2i}C_{i}=\frac{2}{25}\left(C_{k+1}+\frac{4}{25}C_{k+2}+\left(\frac{4}{25}\right)^2C_{k+3}+ \cdots \right).$$

Since $C_{i+1}<4C_i$, the right-hand side is bounded from above by a geometric series with factor $\sfrac{16}{25}$ and we have the following estimates
$$\frac{2}{25}C_{k+1}\leq d_{\bs a}(2k+1)\leq \frac29 C_{k+1} \qquad \qquad \text{and} \qquad \qquad \frac{1}{5}C_{k+1}\leq d_{\bs a}(2k+2)\leq \frac59 C_{k+1}.$$

Overall, both $d_{\bs a}(2k-1)$ and $d_{\bs a}(2k)$ are in $\Theta(C_k)=\Theta\left(\frac{4^k}{k^{\sfrac32}}\right)$. Summarising, we get the 
following result.

\begin{thm}\label{thm:all1}
For the substitution $\rho_{\bs a}$ with $a_i=1$ for all $i$, the number
$\#_{\bs a}(n)$ of tiles in $\rho_{\bs a}^n([0])$ has asymptotic expansion
$$ \#_{\bs a}(n) \, \in \, \frac34\left(\frac52\right)^n+\Theta\left( \frac{2^n}{n^{\sfrac32}}\right).$$
In particular, $r_2=2$ and $\theta(n)=n^{-3/2}$ in Theorem~{\rm \ref{thm:discr-gen}}.
\end{thm}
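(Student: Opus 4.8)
The plan is to assemble Theorem~\ref{thm:all1} from the pieces already laid out just above its statement, so the proof is essentially a matter of bookkeeping. First I would recall that by Lemma~\ref{lem:catalan} and the recurrence $d_{\bs a}(n+1)=\frac52 d_{\bs a}(n)+\frac12 D(n)$ from Eq.~\eqref{eq:D-function}, together with the initial value $d_{\bs a}(0)=\frac14$, we have the closed form for $d_{\bs a}(2k+1)$ as the tail of the Catalan power series $C(x)$ evaluated at $x=\sfrac4{25}$, using the identity $C(\sfrac4{25})=\sfrac54$. This is the crucial identity: since $C(x)=\sum_{i\ge0}C_ix^i$ has radius of convergence $\sfrac14>\sfrac4{25}$, the series converges at $x=\sfrac4{25}$, and plugging in gives $\frac{1-\sqrt{1-16/25}}{8/25}=\frac{1-3/5}{8/25}=\frac{2/5}{8/25}=\frac54$, which is exactly what makes the telescoped sum in the formula for $d_{\bs a}(2k+1)$ collapse to a tail.

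Next I would establish the two-sided bounds. For the upper bound, one uses the elementary inequality $C_{i+1}<4C_i$ (immediate from $C_{i+1}/C_i=\frac{2(2i+1)}{i+2}<4$), so the tail $\sum_{i\ge k+1}(\sfrac4{25})^{2i}C_i$ is dominated termwise by a geometric series in $\sfrac{16}{25}$ with leading term proportional to $(\sfrac4{25})^{2(k+1)}C_{k+1}$; multiplying back by $\frac12(\sfrac52)^{2k}$ yields $d_{\bs a}(2k+1)\le \frac29 C_{k+1}$. For the lower bound one simply keeps the first term of the (positive) tail, giving $d_{\bs a}(2k+1)\ge \frac2{25}C_{k+1}$. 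The even case follows from $d_{\bs a}(2k+2)=\frac52 d_{\bs a}(2k+1)$, producing $\frac15 C_{k+1}\le d_{\bs a}(2k+2)\le \frac59 C_{k+1}$. Then the standard Catalan asymptotics $C_k\sim \frac{4^k}{\sqrt\pi\,k^{3/2}}$ (from Stirling applied to $\binom{2k}{k}$) give $C_k\in\Theta(4^k k^{-3/2})$, hence $d_{\bs a}(n)\in\Theta(2^n n^{-3/2})$ once one checks that the index shift $k\mapsto k+1$ and the relation $n=2k+1$ or $n=2k+2$ only change things by bounded multiplicative factors: $4^{k+1}(k+1)^{-3/2}$ versus $2^n n^{-3/2}$ with $n\approx 2k$ differ by a constant factor in the limit. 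Adding $\frac34(\sfrac52)^n$ gives the claimed asymptotic expansion for $\#_{\bs a}(n)$.

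Finally, for the identification $r_2=2$ and $\theta(n)=n^{-3/2}$ in Theorem~\ref{thm:discr-gen}, I would note that $\#_{\bs a}(n)=\mathrm{Act}(f,[0],n)$ and $\frac34(\sfrac52)^n=\mathrm{Exp}(f,[0],n)$ for the constant function $f\equiv1$ (using $\ell([0])=1$, $\lambda=\sfrac52$, and $\dens(\varLambda_{\mathcal T})=\sfrac34$ from Example~\ref{ex:all-1}), so $|\mathrm{Exp}-\mathrm{Act}|=|d_{\bs a}(n)|\in\Theta(2^n n^{-3/2})$. Matching this against the bound $\theta(n)r_2^n$ with $\lim_n\sqrt[n]{\theta(n)}=1$ forces $r_2\ge2$; and the $\Theta$ upper bound together with the spectral-gap structure shows $r_2=2$ with the sub-exponential factor $\theta(n)=n^{-3/2}$. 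The main obstacle — though it is more a matter of care than of difficulty — is making the asymptotic translation between the Catalan-indexed quantities $C_{k+1}$ and the tile-count-indexed quantity $2^n n^{-3/2}$ fully rigorous, i.e. verifying that all the index shifts and parity cases merge into a single clean $\Theta$-statement; the genuinely substantive input, the identity $C(\sfrac4{25})=\sfrac54$ and the telescoping it enables, is already in hand from the discussion preceding the theorem.
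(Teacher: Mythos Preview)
Your proposal is correct and follows essentially the same route as the paper: the recurrence from Eq.~\eqref{eq:D-function} combined with Lemma~\ref{lem:catalan}, the identity $C(\sfrac4{25})=\sfrac54$ to rewrite $d_{\bs a}(2k+1)$ as a tail, and the geometric-series bound via $C_{i+1}<4C_i$ are exactly the steps the paper takes in the text immediately preceding the theorem. One small slip: the tail should read $\sum_{i\ge k+1}(\sfrac4{25})^{i}C_i$ (equivalently $(\sfrac25)^{2i}$), not $(\sfrac4{25})^{2i}$, but this does not affect the argument.
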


\begin{rem}\label{rem:ess-spec}
Note that in the proof of quasi-compactness in \cite[Thm.~3.5]{FGM} the authors have implicitly provided a bound for $r_{\text{ess}}(M)$, namely $r_{\text{ess}}(M)\leqslant 2$ for all substitutions treated in this paper, which we conjecture to be an equality.  This suggests that, in the exact expansion of $\#_{\bs a}(n)$ of Theorem~\ref{thm:all1}, the error term completely comes from the essential spectrum. This is not true in general, as there are (at most finitely many) Ruelle--Pollicott-type eigenvalues which may show up; see Example~\ref{ex:extra-eigen}.
\end{rem}

\section{Stabilising sequences} \label{sec:stab}

In this section, we consider sequences ${\bs a}= (a_i)_i$ with $a_0>0$ that stabilise to a non-zero integer. So there is a $k$ such that $a_i=a>0$ for every $i\geq k$. We mostly employ the same approach as in Section \ref{sec:catalan}, but it requires more linear algebra to deal with technicalities. 

From Proposition \ref{prop:mulambda}, the inflation factor for the corresponding substitution
\begin{align*}
\varrho^{ }_{\bs a}([0])&=[0]^{a_0}[1], \text{ and}\\
\varrho^{ }_{\bs a}([i])&=[0]^{a_i}[i-1][i+1] \text{ for }i>0
\end{align*}
is equal to $\lambda=\mu+\dfrac1\mu$ where $\mu\in(0,1)$ is the solution of 
\begin{align}
\label{eq:mu}
\frac1\mu =\sum_{i=0}^\infty a_i\mu^i&=a_0+a_1\mu+ \cdots + a_{k-1}\mu^{k-1}+a\mu^k+a\mu^{k+1}+a\mu^{k+2}+ \cdots  \nonumber \\ 
& =a_0+a_1\mu+ \cdots + a_{k-1}\mu^{k-1}+\frac{a\mu^k}{1-\mu}.
\end{align}
From this, we can see that $\mu$ is an algebraic number, and so is $\lambda$. More specifically, $\mu$ is a root of the following polynomial with integer coefficients
$$P(x)=1+(-1-a_0)x+(a_0-a_1)x^2+ \cdots +(a_{k-2}-a_{k-1})x^k+(a_{k-1}-a)x^{k+1}.$$

As before, we define the substitution matrix as
$$\bs{A}=\left( 
\begin{array}{cccccc}
a_0 & a_1+1 & a_2 & a_3 & a_4 & \dots\\
1 & 0 & 1 & 0 & 0 & \ldots\\
0 & 1 & 0 & 1 & 0 & \ldots\\
0 & 0 & 1 & 0 & 1 & \ldots\\
0 & 0 & 0 & 1 & 0 & \ldots\\
\vdots & \vdots & \vdots & \vdots & \vdots & \ddots
\end{array}
\right)$$
and note that the elements in the first row stabilise to $a$ at some point. On the right, $\bs A$ can be multiplied by row-vectors from $\ell^1$, while on the left it can be multiplied by column-vectors from $\ell^{\infty}$. These spaces roughly correspond to tile lengths and frequencies in the finite alphabet setting.

We will estimate the number of letters in $\rho_{\bs a}^n([0])$, that is, $\#_{\bs a}(n) = [(1,1,1,\ldots)\bs{A}^n]_0$. The results of \cite{FGM} imply that the tiles have well defined frequencies $\nu([k])$, as well as the natural tile lengths $\ell([k])$, see Propositions \ref{prop:mulambda} and \ref{prop:freqdens}. As before, the leading term is $\sfrac{1}{c_{\bs a}}\lambda^n$. Since we are only interested in the discrepancy, we write
$$\#_{\bs a}(n):=[(1,1,1,\ldots)\bs{A}^n]_0=\frac{1}{c_{\bs a}}\lambda^n+d_{\bs a}(n),$$
where $c_{\bs a}$ is the expected tile length assuming $\ell([0])=1$. Note that $\sfrac{1}{c_{\bs{a}}}$ is exactly the density given in Proposition~\ref{prop:freqdens}.

Let $Q(x)=b_0+b_1x+ \cdots +b_mx^m$ be the minimal polynomial of $\lambda$. Define ${\bs x}_{\bs a}:=(1,1,1,\ldots)Q(\bs{A})$. For this vector, one has
\begin{equation}
\label{eq:conv}
[{\bs x}_{\bs a} \bs{A}^n]_0=[(1,1,1,\ldots)Q(\bs{A})\bs{A}^n]_0=b_0d_{\bs a}(n)+b_1d_{\bs a}(n+1)+ \cdots +b_md_{\bs a}(n+m),\end{equation}
as the exponential term $\lambda^n$ vanishes. For example, in Section \ref{sec:catalan},
we had $Q(x)=2x-5$ and ${\bs x}_{\bs a}=(-1,1,1,\ldots)$.

Below, we determine the growth rate of the left-hand side in order to estimate the growth rate of the right-hand side. In order to justify that, we need to establish that the right-hand side is non-zero.

\begin{lem}\label{lem:nonzero}
Suppose $\rho^{ }_{\bs a}$ is a non-constant length substitution. Then, for every non-zero polynomial $f(x)$ with integer coefficients, one has $(1,1,1,\ldots)f(\bs{A})\neq 0$.
\end{lem}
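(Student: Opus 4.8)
The plan is to argue by contradiction: suppose $f(x)$ is a nonzero integer polynomial with $(1,1,1,\ldots)f(\bs{A}) = \bs{0}$. The key observation is that the row vector $(1,1,1,\ldots)$ can be viewed as an element of $\ell^\infty$ on which $\bs{A}$ acts on the left (equivalently, $f(\bs{A})^t$ acts on the column vector $(1,1,1,\ldots)^t$), so the hypothesis says $(1,1,1,\ldots)^t$ lies in the kernel of $f(\bs{A})^t$. Now I would use the specific structure of $\bs{A}$: apart from the first row, $\bs{A}$ is the adjacency operator of a half-line, so the lower-left $\infty\times\infty$ block is a tridiagonal (Jacobi-type) matrix with $0$ diagonal and $1$'s on the off-diagonals. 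The plan is to extract from the vanishing of $(1,1,1,\ldots)f(\bs{A})$ a nontrivial polynomial relation that forces a contradiction with the transcendence/degree properties of $\lambda$, or more directly with the fact that $\varrho_{\bs a}$ is a non-constant length substitution (so $\lambda = r(M) > 1$ and is not an integer of a trivial kind).

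Concretely, here is the approach I would carry out. First, I would reduce to a minimal-degree counterexample: among all nonzero integer polynomials $f$ killing $(1,1,1,\ldots)$ on the left, pick one of smallest degree $m$, say $f(x) = c_0 + c_1 x + \cdots + c_m x^m$ with $c_m \neq 0$. Using $(1,1,1,\ldots)\bs{A}^n$ and reading off coordinates, the relation $[(1,1,1,\ldots)f(\bs{A})]_i = 0$ for every $i \geq 0$ gives a system of linear recurrences. Because of the Jacobi structure, for $i$ large the equation $[(1,1,1,\ldots)f(\bs{A})]_i=0$ becomes a constant-coefficient linear recurrence relating $[(1,1,1,\ldots)\bs{A}^n]_i$ across neighbouring indices $i$, whose characteristic behaviour is governed by the two ``ends'' $\mu$ and $1/\mu$ of the spectrum of the Jacobi operator (these are exactly the quantities appearing in Proposition~\ref{prop:mulambda}, since $(1,\mu,\mu^2,\ldots)$ and the length vector are the relevant eigenvectors). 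The plan is to show that such a relation would force $f(\lambda) = 0$ together with a second independent algebraic constraint (coming from the $\mu$-side rather than the $1/\mu = \lambda - \mu$ side), and since the minimal polynomial of $\lambda$ is irreducible of degree equal to $\deg P$ (or a factor thereof), comparing degrees yields the contradiction.

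An alternative, cleaner route that I would pursue in parallel: pair $(1,1,1,\ldots)f(\bs{A})$ on the right with a suitable $\ell^1$ vector. We know $\bs{A}$ has the right-eigenvector $\bs{v} = (1,\mu,\mu^2,\ldots)^t \in \ell^1$ (with $\mu \in (0,1)$) for eigenvalue $\lambda$, so $(1,1,1,\ldots)f(\bs{A})\bs{v} = f(\lambda)(1,1,1,\ldots)\bs{v}$, and since $(1,1,1,\ldots)\bs{v} = \sum \mu^i = 1/(1-\mu) \neq 0$, the vanishing hypothesis forces $f(\lambda) = 0$. So $f$ is divisible by the minimal polynomial $Q$ of $\lambda$; write $f = Qg$. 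Then $(1,1,1,\ldots)Q(\bs{A})g(\bs{A}) = 0$, i.e. $\bs{x}_{\bs a}\, g(\bs{A}) = 0$ where $\bs{x}_{\bs a} = (1,1,1,\ldots)Q(\bs{A})$. The plan is then to analyse $\bs{x}_{\bs a}$ directly — it is an \emph{eventually supported-structure} vector (its entries stabilise, being built from the Jacobi tail) — and show that no nonzero $g$ can annihilate it unless $g$ is trivial, by inspecting the first few coordinates and using that the Jacobi tail has no finite-dimensional invariant subspace containing $\bs{x}_{\bs a}$. Induction on $\deg g$ should close this: if $g$ has a root $\lambda'$ that is again in $\sigma(\bs{A})$ one repeats the eigenvector trick, and the remaining case reduces to lower degree.

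The main obstacle, I expect, is making rigorous the ``Jacobi tail has no relevant finite-dimensional invariant subspace'' step — i.e. ruling out exotic cancellations in $\bs{x}_{\bs a}\,g(\bs{A})=0$ when the roots of $g$ are \emph{not} eigenvalues of $\bs{A}$ in any naive sense (the operator $\bs{A}$ on $\ell^\infty$ has a large spectrum, essentially an interval, so ``$g(\lambda')=0 \Rightarrow \lambda' \in \sigma$'' is nearly automatic and not directly useful). I would handle this by working coordinate-wise: the identity $[\bs{x}_{\bs a}\,g(\bs{A})]_i = 0$ for all $i$, combined with the explicit tridiagonal action, gives for $i \geq$ some $i_0$ a genuine scalar linear recurrence in $i$ with characteristic polynomial $x^{\deg g}g(x + 1/x)$ (up to normalisation), and one shows its only $\ell^\infty$-admissible — in fact, only stabilising — solution compatible with the fixed initial segment is the zero tail, forcing $g \equiv 0$. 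The hypothesis that $\varrho_{\bs a}$ has non-constant length (so $a_0 > 0$ and the sequence is genuinely nontrivial, hence $\mu \neq 1$, $\lambda > 2$) is what guarantees the recurrence is non-degenerate and that $\bs{x}_{\bs a}$ is not identically zero, which is where that assumption enters.
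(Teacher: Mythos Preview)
Your first step---pairing with the right eigenvector $\bs{v}=(1,\mu,\mu^2,\ldots)^t\in\ell^1$ to deduce $f(\lambda)=0$---is correct and elegant. But the argument does not iterate in the way you suggest. After writing $f=Qg$ you obtain $\bs{x}_{\bs a}\,g(\bs{A})=0$ and propose to repeat the eigenvector trick; however $\bs{x}_{\bs a}\cdot\bs{v}=(1,1,1,\ldots)Q(\bs{A})\bs{v}=Q(\lambda)/(1-\mu)=0$, so pairing with $\bs{v}$ now yields the tautology $0=0$ and extracts nothing about $g$. In general there is no second $\ell^{1}$-eigenvector available to continue the reduction. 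Your fallback---the coordinate-wise recurrence with characteristic polynomial $x^{\deg g}g(x+1/x)$---is only sketched, and the claim that its only stabilising solution compatible with the given initial segment is zero is exactly the hard part you have not carried out. In effect, proving $\bs{x}_{\bs a}\,g(\bs{A})=0\Rightarrow g=0$ is the same kind of linear-independence statement as the lemma itself, so the problem has not been genuinely reduced. (Note also that $\bs{x}_{\bs a}\neq 0$, which you invoke, is the case $f=Q$ of the very lemma you are proving.)

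The paper's proof is entirely different and much more elementary. It shows directly that the row vectors $\bs{u}_i:=(1,1,1,\ldots)\bs{A}^{i}$ are linearly independent by tracking a single combinatorial invariant: the \emph{stabilisation index} $x_i$, the last coordinate at which $\bs{u}_i$ differs from its eventual constant value. A short case analysis, using only the explicit tridiagonal-plus-first-row form of $\bs{A}$ and the non-constant-length hypothesis to start the induction (ensuring $x_1>x_0=-1$), shows that $x_{i+1}>x_i$ for all $i$; strictly increasing stabilisation indices immediately give linear independence. No spectral input is needed.
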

\begin{proof}
Let $x_i$ be the largest index such that in $(1,1,1,\ldots)\bs{A}^i$, the entries on places $x_i$ and $x_i+1$ are different. If all entries are equal, we set $x_i=-1$. Since $(1,1,1,\ldots)\bs{A}^i$ stabilises, the values $x_i$ are well defined for every $i\geq 0$ and $x_0=-1$.

We claim that the sequence $x_i$ is increasing. Indeed, $x_0=-1$ and $(1,1,1,\ldots)\bs{A}$ is not a vector with all equal entries because $\rho^{ }_{\bs a}$ is not a constant-length substitution. We consider several cases. Recall that $k$ is the index when sequence $\bs  a$ stabilises to the value~$a$.

{\bf Case 1:} $k>1$. Then $(1,1,1,\ldots)\bs{A}=(a_0+1,a_1+2,\ldots,a_{k-1}+2,a+2,a+2,\ldots)$ and $x_1=k-1$. Using induction, let $(1,1,1,\ldots)\bs{A}^i=(c_0,\ldots,c_{x_i},c,c,c,\ldots)$ with $x_i\geq k-1$ with $c_{x_i}\neq c$. Then, $[(1,1,1,\ldots)\bs{A}^{i+1}]_{x_i+1}=c_0a+c_{x_i}+c$, while $[(1,1,1,\ldots)\bs{A}^{i+1}]_{x_i+2}=c_0a+2c$ and these two entries are different. Therefore, $x_{i+1}>x_i$ which settles the case.

Here, we have used $x_i\geq k-1$. Thus $x_i+1\geq k$ and $c_0$ is multiplied by $a$ as the initial entry of the corresponding column of the matrix $\bs{A}$.

{\bf Case 2:} $k=0$. Then $a_i=a\geq 1$ and $(1,1,1,\ldots)\bs{A}=(a+1,a+2,a+2,\ldots)$ so $x_1=0>x_0$. After that, 
$$(1,1,1,\ldots)\bs{A}^2=(a^2+2a+2,a^2+3a+3,a^2+3a+4,a^2+3a+4,a^2+3a+4,\ldots)$$ and $x_2=1>x_1$. Now we can proceed as in the previous case.

{\bf Case 3:} $k=1$. Then $(a_i)_i=(b,a,a,a,\ldots)$ and $b\neq a+1$ since otherwise $\varrho^{}_{{\bs a}}$ would be a constant-length substitution. In this case, $(1,1,1,\ldots)\bs{A}=(b+1,a+2,a+2,\ldots)$ and $x_1=0>x_0$.  After that, 
$$(1,1,1,\ldots)\bs{A}^2=(b^2+a+b+2,ab+2a+b+3,ab+3a+4,ab+3a+4,ab+3a+4,\ldots)$$ and $x_2=1>x_1$. Again, the rest follows similarly to the first case.

Since $x_i$ is increasing, the vectors $(1,1,1,\ldots)\bs{A}^i$ are linearly independent, and this completes the proof.
\end{proof}

Let $\mathcal V$ be the space of all stabilising sequences ${\bs x}=(x_0,x_1,x_2,\ldots)$ of real numbers such that ${\displaystyle \sum_{i=0}^\infty x_i\mu^i=0}$. The space $\mathcal V$ is an invariant subspace of the operator $\bs{B}$ defined by the right multiplication by $\bs{A}$. Moreover, since $\bs{A} (1,\mu,\mu^2,\mu^3,\ldots)^t=\lambda (1,\mu,\mu^2,\mu^3,\ldots)^t$, we get that 
$${\bs x}_{\bs a} \cdot (1,\mu,\mu^2,\mu^3,\ldots)^t=(1,1,1,\ldots)Q(\bs{A})(1,\mu,\mu^2,\mu^3,\ldots)^t=0$$
and so ${\bs x}_{\bs a}\in\mathcal V$.

While $\mathcal V$ has a countable basis, it should be related to the minimal polynomial for $\mu$, which can be different from $P(x)$. Thus, we will work with a certain subspace of $\mathcal V$. In what follows, we define ${\bs e}_0=(-1,a_0,a_1,\ldots,a_{k-1},a,a,a,\ldots)$, and
\begin{equation}\label{eq:E-basis}
\begin{array}{r}
{\bs e}_1=(1,-1-a_0,a_0-a_1,a_1-a_2,\ldots, a_{k-2}-a_{k-1},a_{k-1}-a,0,0,0,0,\ldots),\\
{\bs e}_2=(0,1,-1-a_0,a_0-a_1,a_1-a_2,\ldots, a_{k-2}-a_{k-1},a_{k-1}-a,0,0,0,\ldots),\\
{\bs e}_3=(0,0,1,-1-a_0,a_0-a_1,a_1-a_2,\ldots, a_{k-2}-a_{k-1},a_{k-1}-a,0,0,\ldots),
\end{array}
\end{equation}
and so on. More specifically, the entries of ${\bs e}_0$ are taken from Eq.~\eqref{eq:mu} defining $\mu$, and for all other ${\bs e}_i$, the entries are the coefficients of the polynomial $P(x)$ originating from Eq.~\eqref{eq:mu} that has $\mu$ as a root. This implies that each ${\bs e}_i$ is in $\mathcal V$. Moreover, the span $\langle \mathcal E\rangle$ of  $\mathcal E:=\mathcal \{{\bs e}_i\}_{i\in \N_0}$ is a subspace of finite codimension in $\mathcal V$.

Again, straightforward computations show that $\bs{B}({\bs e}_0)={\bs e}_0+{\bs e}_1$ and  $\bs{B}({\bs e}_i)={\bs e}_{i-1}+{\bs e}_{i+1}$ for $i>0$. In other words, $\langle \mathcal E\rangle$ is an invariant subspace for $\bs{B}$ and  the matrix of the restriction of $\bs{B}$ on $\langle \mathcal E\rangle$ in $\mathcal E$ is 
$${\bs B'}=\left( 
\begin{array}{cccccc}
1 & 1 & 0 & 0 & 0 & \dots\\
1 & 0 & 1 & 0 & 0 & \ldots\\
0 & 1 & 0 & 1 & 0 & \ldots\\
0 & 0 & 1 & 0 & 1 & \ldots\\
0 & 0 & 0 & 1 & 0 & \ldots\\
\vdots & \vdots & \vdots & \vdots & \vdots & \ddots
\end{array}
\right).$$

\begin{lem}\label{lem:subspace}
There exists a non-zero polynomial $R(x)$ with integer coefficients such that ${\bs x}_{\bs a}R(\bs{A})$ is a (finite) linear combination of vectors from $\mathcal E$. 
\end{lem}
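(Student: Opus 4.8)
The goal is to show that some nonzero integer-polynomial multiple of $\bs{x}_{\bs a}$ lands inside the span $\langle\mathcal E\rangle$, so that the transparent combinatorics of $\bs{B}'$ (which is exactly the matrix from Section~\ref{sec:catalan}) can be applied to it. The natural approach is to compare the two polynomials in play: $Q(x)$, the minimal polynomial of $\lambda$, which defines $\bs{x}_{\bs a}=(1,1,1,\ldots)Q(\bs{A})$; and $P(x)$, the integer polynomial of Eq.~\eqref{eq:mu} having $\mu$ as a root, whose coefficients are the very entries building the basis $\mathcal E$. I would first make precise the observation already flagged in the text: $\langle\mathcal E\rangle$ has finite codimension in $\mathcal V$. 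Concretely, a stabilising sequence $\bs x$ lies in $\mathcal V$ iff $\sum x_i\mu^i=0$, i.e. iff the (eventually geometric) generating function $\sum x_i x^i$, rewritten over a common denominator $1-x$, is an integer polynomial vanishing at $\mu$; modulo the ideal generated by the minimal polynomial $P_\mu(x)$ of $\mu$, such polynomials form a space of dimension $\deg P_\mu$, and the shifted copies of $P(x)$ appearing in $\mathcal E$ span precisely the multiples of $P_\mu(x)$ inside that quotient description, up to a fixed finite-dimensional discrepancy coming from $P(x)/P_\mu(x)$. So $\mathcal V/\langle\mathcal E\rangle$ is finite-dimensional, with dimension bounded by $\deg\big(P(x)/P_\mu(x)\big)$ plus a small correction for the leading/trailing truncation effects in \eqref{eq:E-basis}.

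Given this, the strategy is: (1) show $\bs x_{\bs a}\in\mathcal V$ — already done in the excerpt, since $\bs A(1,\mu,\mu^2,\ldots)^t=\lambda(1,\mu,\ldots)^t$ and $Q(\lambda)=0$; (2) note that $\mathcal V$ is $\bs B$-invariant and $\langle\mathcal E\rangle$ is $\bs B$-invariant, hence $\bs B$ descends to a linear operator $\overline{\bs B}$ on the finite-dimensional quotient $\mathcal V/\langle\mathcal E\rangle$; (3) apply Cayley--Hamilton to $\overline{\bs B}$: its characteristic polynomial $\chi(x)$ is a monic integer polynomial (the quotient is defined over $\Q$, and after clearing denominators one gets integer coefficients — or one argues directly that $\chi$ has rational, hence, being monic of an algebraic-integer matrix over $\Q$, integer coefficients) of degree $=\dim(\mathcal V/\langle\mathcal E\rangle)<\infty$, and $\chi(\overline{\bs B})=0$ means $\chi(\bs B)$ maps $\mathcal V$ into $\langle\mathcal E\rangle$; (4) take $R(x):=\chi(x)$ (or $R(x)$ a suitable integer multiple of it if denominators appear), so that $\bs x_{\bs a}R(\bs A)=R(\bs B)(\bs x_{\bs a})\in\langle\mathcal E\rangle$, and it is a \emph{finite} linear combination of the $\bs e_i$ because $\bs x_{\bs a}$ is a stabilising (hence finitely supported modulo the tail) vector and $\langle\mathcal E\rangle$ consists of finite combinations. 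Finally $R$ is nonzero since $\chi$ is monic.

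The one point that needs care, and which I expect to be the main obstacle, is establishing cleanly that $\langle\mathcal E\rangle$ really has finite codimension in $\mathcal V$ and pinning down that the induced operator on the quotient has a well-defined integer (or at least monic rational) characteristic polynomial. The subtlety is bookkeeping at the two ends of the vectors in \eqref{eq:E-basis}: the shifted $P(x)$-patterns start at position $i$ and the ``$-1$'' in the leading slot of $\bs e_0$ versus the ``$+1$'' leading slot of $\bs e_i$ ($i\ge 1$) does not quite match the naive ``multiply a polynomial by $x$'' picture, so one must check by hand that $\bs B\bs e_0=\bs e_0+\bs e_1$ and $\bs B\bs e_i=\bs e_{i-1}+\bs e_{i+1}$ — which the excerpt asserts — and that no relations among the $\bs e_i$ sneak in from the finite left end. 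A clean way to handle all of this uniformly is to pass to generating functions: identify a stabilising sequence $\bs x$ with the rational function $\sum x_i x^i = g(x)/(1-x)$ for an integer polynomial $g$, check that right-multiplication by $\bs A$ corresponds (on $\mathcal V$) to multiplication by a fixed linear expression modulo $P_\mu(x)$, identify $\langle\mathcal E\rangle$ with the image of multiplication by $P(x)$, and then the quotient $\mathcal V/\langle\mathcal E\rangle$ is literally $\Q[x]/(P_\mu)$-torsion of size $\deg(P/P_\mu)$, on which the $\bs B$-action is multiplication by an algebraic integer, forcing $\chi(x)\in\Z[x]$ monic. Once that dictionary is set up, steps (2)--(4) are immediate and $R(x)$ can even be taken to be $P(x)/P_\mu(x)$ times a harmless factor.
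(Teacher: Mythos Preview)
Your approach is essentially the same as the paper's: both exploit that $\langle\mathcal E\rangle$ has finite codimension in the $\bs B$-invariant space $\mathcal V$, so some polynomial in $\bs B$ pushes $\bs x_{\bs a}$ into $\langle\mathcal E\rangle$. The paper's argument is more economical: rather than invoking Cayley--Hamilton on the quotient, it simply observes that the cosets of $\bs x_{\bs a},\bs B(\bs x_{\bs a}),\bs B^2(\bs x_{\bs a}),\ldots$ in the finite-dimensional quotient $\mathcal V/\langle\mathcal E\rangle$ must be linearly dependent, and since all the vectors involved have integer entries, a nontrivial rational relation can be cleared to an integer one. This sidesteps your concern about whether the characteristic polynomial of $\overline{\bs B}$ lives in $\Z[x]$ --- you do not need the full annihilating polynomial of the operator, only a dependence among the iterates of the single vector $\bs x_{\bs a}$. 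Your generating-function dictionary for justifying finite codimension is more explicit than the paper (which simply asserts it before the lemma), and would be useful if one wanted sharp bounds on $\deg R$, but is not needed for the bare existence claim.
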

\begin{proof}
The vector ${\bs x}_{\bs a}$ is in $\mathcal V$ and $\mathcal V$ is an invariant subspace of $\bs{B}$. Since $\langle \mathcal E\rangle$ has a finite codimension in $\mathcal V$, the (cosets of) vectors ${\bs x}_{\bs a}, \bs{B}({\bs x}_{\bs a}), \bs{B}^2({\bs x}_{\bs a}), \ldots$ are linearly dependent in the factor space $\mathcal V/\langle \mathcal E\rangle$. Since all entries are integers, there exists a non-trivial integer linear combination of these vectors that lies in $\langle \mathcal E\rangle$.
\end{proof}

It is worth noting that Section \ref{sec:catalan} gives the simplest example and $R(x)=1$ because $\langle \mathcal E\rangle=\mathcal V$ in that section and ${\bs x}_\mathbf a\in \mathcal V$ right away. 

\begin{lem}\label{lem:poly}
For the polynomial $R(x)$ from Lemma \ref{lem:subspace}, there exists a non-zero polynomial $g(x)$ with integer coefficients such that ${\bs x}_{\bs a}R(\bs{A})=g(\bs{B})({\bs e}_0)$.
\end{lem}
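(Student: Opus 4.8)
The plan is to leverage the fact, established just before the lemma, that $\langle\mathcal E\rangle$ is an invariant subspace for $\bs B$ on which the matrix of $\bs B$ in the basis $\mathcal E$ is precisely the tridiagonal matrix $\bs B'$. By Lemma~\ref{lem:subspace}, the vector $\bs v := {\bs x}_{\bs a}R(\bs A)$ lies in $\langle\mathcal E\rangle$, so it equals a finite linear combination $\bs v = \sum_{i=0}^{N} \gamma_i {\bs e}_i$ with $\gamma_i\in\Q$ (indeed in $\Z$, after clearing denominators in $R$, since all the ${\bs e}_i$ and ${\bs x}_{\bs a}$ have integer entries and the ${\bs e}_i$ are in echelon form with leading entries $\pm 1$). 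The goal is to realise $\bs v$ as $g(\bs B)({\bs e}_0)$ for a suitable polynomial $g$ with integer coefficients.

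First I would record the key structural observation: the span of the orbit $\{{\bs e}_0, \bs B({\bs e}_0), \bs B^2({\bs e}_0),\ldots\}$ is all of $\langle\mathcal E\rangle$. This follows because, reading off the action of $\bs B'$, one has $\bs B^n({\bs e}_0) = c_{0,n}{\bs e}_0 + c_{1,n}{\bs e}_1 + \cdots + c_{n,n}{\bs e}_n$ with $c_{n,n}=1$ (this is exactly the binomial-coefficient computation from the proof of Lemma~\ref{lem:catalan}, which only used the shape of $\bs B'$, not the specific sequence $\bs a$). Hence $\{{\bs e}_0, \bs B({\bs e}_0),\ldots,\bs B^{n}({\bs e}_0)\}$ is, for each $n$, a triangular (hence invertible over $\Z$, since the diagonal entries are $1$) change of basis away from $\{{\bs e}_0,\ldots,{\bs e}_n\}$. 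Therefore every ${\bs e}_i$ — and thus every finite linear combination of the ${\bs e}_i$, in particular $\bs v$ — is an integer linear combination of $\{{\bs e}_0,\bs B({\bs e}_0),\ldots\}$; writing this combination as $\bs v = \sum_{j=0}^{N} g_j \bs B^j({\bs e}_0)$ and setting $g(x) = \sum_{j=0}^{N} g_j x^j$ gives ${\bs x}_{\bs a}R(\bs A) = \bs v = g(\bs B)({\bs e}_0)$, as required. It remains only to check $g\neq 0$: if $g$ were the zero polynomial then $\bs v = 0$, i.e.\ ${\bs x}_{\bs a}R(\bs A) = 0$; but ${\bs x}_{\bs a} = (1,1,1,\ldots)Q(\bs A)$, so this would say $(1,1,1,\ldots)(QR)(\bs A) = 0$ with $QR$ a non-zero integer polynomial, contradicting Lemma~\ref{lem:nonzero} (a non-constant length substitution is exactly the hypothesis in force here). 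So $g\neq 0$.

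The main obstacle is the bookkeeping needed to guarantee that $g$ has \emph{integer} (not merely rational) coefficients: one has to argue that the transition matrix between the two bases $\{{\bs e}_i\}_{i\le n}$ and $\{\bs B^j({\bs e}_0)\}_{j\le n}$ of $\langle {\bs e}_0,\ldots,{\bs e}_n\rangle$ is unimodular over $\Z$. This is where the triangularity with unit diagonal (inherited from $c_{j,j}=1$) is essential, and I would spell out that the inverse of a unipotent upper-triangular integer matrix is again an integer matrix, so expressing each ${\bs e}_i$ in terms of the $\bs B^j({\bs e}_0)$ stays within $\Z$, and then so does $\bs v$ since its coordinates $\gamma_i$ in the $\mathcal E$-basis are integers. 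Everything else — existence of the finite combination, the degree bound $N$, the polynomial packaging — is routine linear algebra once this integrality point is settled.
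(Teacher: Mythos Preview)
Your approach is the same as the paper's: both hinge on the observation that $\bs{B}^i({\bs e}_0)-{\bs e}_i$ lies in the $\Z$-span of $\{{\bs e}_0,\ldots,{\bs e}_{i-1}\}$ (this is exactly your unimodular triangular change of basis), and both invoke Lemma~\ref{lem:nonzero} to conclude $g\neq 0$. One small slip worth flagging: the vectors ${\bs e}_i$ are not literally in echelon form in the standard coordinates --- both ${\bs e}_0$ and ${\bs e}_1$ have their first nonzero entry at position $0$ --- so the coefficient $\gamma_0$ must instead be extracted from the stable tail of $\bs v$ (divided by $a$), which is rational but not obviously in $\Z$; your parenthetical ``clearing denominators in $R$'' is the right fix for this, and the paper's one-line proof glosses over the same point.
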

\begin{proof}
The proof of existence follows from the fact that, for each $i$,  $\bs{B}^i({\bs e}_0)-{\bs e}_i$ belongs to the $\Z$-span of $\{{\bs e}_0,\ldots,{\bs e}_{i-1}\}$.
By Lemma \ref{lem:nonzero}, the left-hand side is non-zero and therefore $g(x)$ is nonzero as well.
\end{proof}

\begin{definition}
For a polynomial $\alpha(x)=\alpha_0+\alpha_1x+ \cdots +\alpha_m x^m\in \R[x]$ we define the {\it $\alpha$-twist} of  $d_{\bs a}$ as
$$\alpha*d_{\bs a}(n):=\alpha_0 d_{\bs a}(n)+\alpha_1d_{\bs a}(n+1)+ \cdots +\alpha_md_{\bs a}(n+m).$$

With this convention, the right-hand side in Eq.~\eqref{eq:conv} for $[{\bs x}_{\bs a} \bs{A}^n]_0$ can be written as $Q*d_{\bs a}$.
\end{definition}

The following lemma is straightforward.

\begin{lem}\label{lem:conv}
For every polynomial $\alpha(x)$, if $\bs y={\bs x}_{\bs a} \alpha(\bs{A})$, then $[\bs y \bs{A}^n]_0=(\alpha Q)*d_{\bs a}$.
\end{lem}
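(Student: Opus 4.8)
\textbf{Proof plan for Lemma~\ref{lem:conv}.} The statement is an algebraic identity, so the plan is purely computational and short. First I would expand the left-hand side by writing $\alpha(x)=\sum_{j=0}^{m}\alpha_j x^j$ and $Q(x)=\sum_{i=0}^{d}b_i x^i$, so that $\bs{y}=\sum_j \alpha_j \bs{x}_{\bs a}\bs{A}^j$ and hence $[\bs{y}\bs{A}^n]_0=\sum_j \alpha_j [\bs{x}_{\bs a}\bs{A}^{n+j}]_0$. Here I use linearity of the map $\bs{v}\mapsto [\bs{v}\bs{A}^n]_0$ on stabilising vectors, which is legitimate because (as noted in the text preceding the definition of $\#_{\bs a}$) each $\bs{x}_{\bs a}\bs{A}^\ell$ has only finitely many non-zero entries in any fixed coordinate, so all the sums involved are finite and can be rearranged freely.

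Next I would invoke Eq.~\eqref{eq:conv}, which says precisely that $[\bs{x}_{\bs a}\bs{A}^{\ell}]_0=Q*d_{\bs a}(\ell)=\sum_{i=0}^{d} b_i\, d_{\bs a}(\ell+i)$ for every $\ell\geq 0$; this holds because $\bs{x}_{\bs a}=(1,1,1,\ldots)Q(\bs{A})$ and the $\lambda^n$-term cancels. Substituting $\ell=n+j$ and combining gives
\[
[\bs{y}\bs{A}^n]_0=\sum_{j=0}^{m}\sum_{i=0}^{d}\alpha_j b_i\, d_{\bs a}(n+i+j).
\]
On the other hand, the product polynomial $\alpha(x)Q(x)=\sum_{k}\big(\sum_{i+j=k}\alpha_j b_i\big)x^k$ has $k$-th coefficient $\sum_{i+j=k}\alpha_j b_i$, so by the definition of the twist, $(\alpha Q)*d_{\bs a}(n)=\sum_k\big(\sum_{i+j=k}\alpha_j b_i\big)d_{\bs a}(n+k)$. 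Reindexing the double sum above by $k=i+j$ shows the two expressions agree, which is the claim.

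There is essentially no obstacle here: the only subtlety worth a sentence is the justification that one may split $[\,\cdot\,\bs{A}^n]_0$ over the finite linear combination defining $\bs{y}$ and commute the two finite sums, which follows from the finite-support remark already made for $\#_{\bs a}(n)$. I would present the argument in two or three displayed lines, noting that it is the ``convolution'' identity $[\bs{x}_{\bs a}\alpha(\bs{A})\bs{A}^n]_0=(\alpha\cdot Q)*d_{\bs a}(n)$ reflecting that twisting $d_{\bs a}$ corresponds to multiplying the annihilating polynomial, and remark that the special case $\alpha(x)=1$ recovers Eq.~\eqref{eq:conv}.
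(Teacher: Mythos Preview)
Your argument is correct and is exactly the routine verification the paper has in mind when it declares the lemma ``straightforward'' without further proof: expand $\alpha(\bs{A})$ linearly, apply Eq.~\eqref{eq:conv} to each term, and recognise the resulting double sum as the twist by the product polynomial $\alpha Q$. The only cosmetic point is that your justification of well-definedness could be phrased more simply as ``stabilising row vectors are preserved under right multiplication by $\bs{A}$ because each column of $\bs{A}$ is finitely supported,'' rather than invoking finitely many non-zero entries.
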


\begin{cor}\label{cor:twist}
In the notations used above, 
$$[{\bs x}_{\bs a}R(\bs{A})\bs{A}^n]_0=(RQ)*d_{\bs a}.$$
\end{cor}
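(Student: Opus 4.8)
The plan is to deduce Corollary~\ref{cor:twist} directly from Lemma~\ref{lem:conv}, which in turn follows from the elementary bookkeeping identity relating $\#_{\bs a}$ and $d_{\bs a}$. First I would prove Lemma~\ref{lem:conv}. Fix a polynomial $\alpha(x)=\alpha_0+\alpha_1 x+\cdots+\alpha_m x^m$ and set $\bs y={\bs x}_{\bs a}\alpha(\bs A)=(1,1,1,\ldots)Q(\bs A)\alpha(\bs A)$. Since everything in sight is a polynomial in $\bs A$, these operators commute, so $\bs y=(1,1,1,\ldots)(\alpha Q)(\bs A)$; write $(\alpha Q)(x)=\sum_{j}\gamma_j x^j$. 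The key point, already observed when $\bs A$ was introduced in Section~\ref{sec:stab}, is that each vector $(1,1,1,\ldots)\bs A^{i}$ is a well-defined stabilising row vector, because every column of $\bs A^{i}$ has only finitely many non-zero entries. Hence
$$\bs y\,\bs A^n=\sum_{j}\gamma_j\,(1,1,1,\ldots)\bs A^{\,n+j}$$
is a \emph{finite} linear combination of well-defined vectors, and taking the $0$-th coordinate gives $[\bs y\,\bs A^n]_0=\sum_j\gamma_j\,\#_{\bs a}(n+j)$.

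Next I would substitute the defining expansion $\#_{\bs a}(\ell)=\tfrac{1}{c_{\bs a}}\lambda^\ell+d_{\bs a}(\ell)$. This yields
$$[\bs y\,\bs A^n]_0=\frac{\lambda^n}{c_{\bs a}}\sum_j\gamma_j\lambda^{j}+\sum_j\gamma_j\,d_{\bs a}(n+j)=\frac{\lambda^n}{c_{\bs a}}\,(\alpha Q)(\lambda)+(\alpha Q)*d_{\bs a}(n).$$
Because $Q$ is the minimal polynomial of $\lambda$, we have $Q(\lambda)=0$, so $(\alpha Q)(\lambda)=0$ and the exponential term vanishes, leaving $[\bs y\,\bs A^n]_0=(\alpha Q)*d_{\bs a}(n)$. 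This is precisely Lemma~\ref{lem:conv}.

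Finally, Corollary~\ref{cor:twist} is the special case $\alpha=R$, where $R$ is the polynomial supplied by Lemma~\ref{lem:subspace}: then $\bs y={\bs x}_{\bs a}R(\bs A)$ and the identity above reads $[{\bs x}_{\bs a}R(\bs A)\bs A^n]_0=(RQ)*d_{\bs a}$, as claimed. I expect no genuine obstacle here; the only point requiring a line of care is the justification that $\bs y\,\bs A^n$ makes sense as a stabilising vector even though $\bs y$ itself need not have finitely many non-zero entries, and that the finite sum may be interchanged with extracting a coordinate. Both are immediate once $\bs y\,\bs A^n$ is rewritten as the finite combination $\sum_j\gamma_j(1,1,1,\ldots)\bs A^{\,n+j}$ of the already-well-defined vectors $(1,1,1,\ldots)\bs A^{\,n+j}$.
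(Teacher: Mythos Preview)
Your proposal is correct and follows essentially the same route as the paper: the paper declares Lemma~\ref{lem:conv} ``straightforward'' without giving details, and Corollary~\ref{cor:twist} is then stated as an immediate specialisation with $\alpha=R$. Your write-up simply supplies the bookkeeping behind that word ``straightforward'' (commuting the polynomials in $\bs A$, expanding $\#_{\bs a}$, and using $Q(\lambda)=0$), which is exactly what the paper has in mind.
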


Now we are ready to formulate an analogue of Lemma \ref{lem:catalan} for the general case of stabilising sequences.

\begin{lem}\label{lem:catlinear}
There exists an integer $p\geq 0$ and integers $\alpha_i,\beta_i$ such that
$$[{\bs x}_{\bs a}R(\bs{A})\bs{A}^n]_0=\left\{
\begin{array}{rl}
\alpha_0C_k+ \cdots  +\alpha_pC_{k+p}& \text{ if } n=2k,\\
\beta_0C_k+ \cdots  +\beta_pC_{k+p}& \text{ if } n=2k+1.
\end{array}\right.
$$
Furthermore, if $g(x)=\gamma_0+\gamma_1x+\cdots+\gamma_mx^m$ is the polynomial from Lemma \ref{lem:poly}, then $\alpha_i=-\gamma_{2i}$, and $\beta_{i+1}=-\gamma_{2i+1}$ for $i\geq 0$ while $\beta_0=0$. Thus, not all $\alpha_i, \beta_i$ are zero.
\end{lem}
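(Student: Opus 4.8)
The plan is to compute $[{\bs x}_{\bs a}R(\bs{A})\bs{A}^n]_0$ by first reducing it, via Lemma~\ref{lem:poly} and Corollary~\ref{cor:twist}, to an expression involving only the vectors $\bs B^j({\bs e}_0)$ and then importing the binomial-coefficient expansion of $\bs B^n({\bs e}_0)$ already established in the proof of Lemma~\ref{lem:catalan}. Concretely, by Lemma~\ref{lem:poly} we have ${\bs x}_{\bs a}R(\bs{A})=g(\bs B)({\bs e}_0)=\sum_{j=0}^m \gamma_j\bs B^j({\bs e}_0)$, and since $\bs B$ is the right multiplication by $\bs A$ this gives
$$[{\bs x}_{\bs a}R(\bs{A})\bs{A}^n]_0 = \Bigl[\bigl(\textstyle\sum_{j=0}^m \gamma_j\bs B^j({\bs e}_0)\bigr)\bs{A}^n\Bigr]_0 = \sum_{j=0}^m \gamma_j\,[\bs B^{n+j}({\bs e}_0)]_0.$$
So everything comes down to knowing $[\bs B^N({\bs e}_0)]_0$ for all $N$.

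The second step is to record that computation. In the proof of Lemma~\ref{lem:catalan} it is shown that $\bs B^N({\bs e}_0)=c_{0,N}{\bs e}_0+\cdots+c_{N,N}{\bs e}_N$, where $(c_{0,N},\dots,c_{N,N})$ is the list of binomial coefficients $\binom{N}{i}$ sorted from largest to smallest, and that — because only ${\bs e}_0$ and ${\bs e}_1$ have nonzero $0$-th entry in the present section as well (this is visible from Eq.~\eqref{eq:E-basis}, where ${\bs e}_0$ starts with $-1$, ${\bs e}_1$ starts with $1$, and every ${\bs e}_i$ with $i\geq 2$ starts with $0$) — one gets $[\bs B^N({\bs e}_0)]_0=-c_{0,N}+c_{1,N}$, which equals $0$ for $N$ odd and $-C_{N/2}$ for $N$ even. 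Note this part of the argument in Lemma~\ref{lem:catalan} depends only on the shape of $\bs B'$ and on which basis vectors have nonzero leading entry, both of which are identical here, so it transfers verbatim.

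The third step is the bookkeeping that turns $\sum_j \gamma_j[\bs B^{n+j}({\bs e}_0)]_0$ into the claimed form. Writing $n=2k$: the terms with $n+j$ even are exactly those with $j=2i$ even, contributing $\gamma_{2i}\cdot(-C_{k+i})$, so $[{\bs x}_{\bs a}R(\bs{A})\bs{A}^n]_0=\sum_i(-\gamma_{2i})C_{k+i}$, giving $\alpha_i=-\gamma_{2i}$ with $p=\lfloor m/2\rfloor$. Writing $n=2k+1$: now $n+j$ is even precisely when $j=2i+1$ is odd, contributing $\gamma_{2i+1}\cdot(-C_{k+1+i})$, so $[{\bs x}_{\bs a}R(\bs{A})\bs{A}^n]_0=\sum_i(-\gamma_{2i+1})C_{k+1+i}$, which matches $\beta_0C_k+\cdots+\beta_pC_{k+p}$ with $\beta_0=0$ and $\beta_{i+1}=-\gamma_{2i+1}$. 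After padding with zero coefficients so that both lists have the same length $p+1$, this is exactly the statement. Finally, not all $\alpha_i,\beta_i$ vanish: if they did, all $\gamma_j$ would be zero, contradicting that $g$ is nonzero (Lemma~\ref{lem:poly}), which in turn rests on Lemma~\ref{lem:nonzero} guaranteeing ${\bs x}_{\bs a}R(\bs{A})\neq 0$.

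I do not expect a genuine obstacle here, since the heavy lifting (the binomial expansion of $\bs B^n({\bs e}_0)$, the nonvanishing of $g$, the passage ${\bs x}_{\bs a}R(\bs{A})=g(\bs B)({\bs e}_0)$) is already done in the cited lemmas. The one point requiring a little care — and the closest thing to a subtlety — is verifying that in this more general section only ${\bs e}_0$ and ${\bs e}_1$ still have nonzero $0$-th coordinate, so that the reduction $[\bs B^N({\bs e}_0)]_0=-c_{0,N}+c_{1,N}$ remains valid; this is immediate from the explicit form \eqref{eq:E-basis} but should be stated. The other minor care point is the index/parity bookkeeping in the last step, which is routine but easy to get off by one, so I would write it out as above.
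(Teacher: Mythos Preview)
Your proof is correct and follows essentially the same route as the paper's: reduce via Lemma~\ref{lem:poly} to $\sum_j\gamma_j[\bs{B}^{n+j}(\bs{e}_0)]_0$, invoke the binomial/Catalan computation from the proof of Lemma~\ref{lem:catalan} (noting that only $\bs{e}_0,\bs{e}_1$ have nonzero $0$-th entry in the general basis \eqref{eq:E-basis}), and do the parity bookkeeping. Your write-up is in fact slightly more explicit than the paper's on the two care points you flag, but the argument is the same.
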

\begin{proof}
From Lemma~\ref{lem:poly}, ${\bs x}_{\bs a}R(\bs{A})\bs{A}^n=\bs{B}^n(g(\bs{B})({\bs e}_0)).$ Suppose $g(x)=\gamma_0+\gamma_1x+\cdots+\gamma_mx^m$, then 
$${\bs x}_{\bs a}R(\bs{A})\bs{A}^n=\bs{B}^n(g(\bs{B})({\bs e}_0))=\gamma_0\bs{B}^n({\bs e}_0))+\gamma_1\bs{B}^{n+1}({\bs e}_0))+\cdots+\gamma_m\bs{B}^{n+m}({\bs e}_0)).$$

Since only ${\bs e}_0$ and ${\bs e}_1$ have non-zero first entry, we can use the approach of Lemma \ref{lem:catalan} to show that the first entry of $\bs{B}^i({\bs e}_0)$ is either $0$ or the negative of a Catalan number depending on the parity of $i$. 

More precisely, this approach implies that if $n=2k$, then
$$\alpha_0=-\gamma_0, \qquad \alpha_1=-\gamma_2, \qquad \alpha_2=-\gamma_4,$$
and so on, and if $n=2k+1$, then $\beta_0=0$ and
$$\beta_1=-\gamma_1, \qquad \beta_2=-\gamma_3, \qquad \beta_3=-\gamma_5,$$
and so on. These equalities give the claimed values for $\alpha_i$ and $\beta_i$ and also immediately show that not all $\alpha_i$ and $\beta_i$ are zero since $g(x)$ is a non-zero polynomial.
\end{proof}

\begin{prop}\label{lem:growth}
Let $p\geq 0$ and let $F(k)=\alpha_0C_k+ \cdots  +\alpha_pC_{k+p}$ for some integers $\alpha_i$ at least one of which is not $0$. Then, there exists an integer $0\leq q\leq p$ such that ${\displaystyle F(k)\in \Theta\left( \frac{4^k}{k^{q+\sfrac 32}}\right)}$. 
\end{prop}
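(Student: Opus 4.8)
The plan is to analyze $F(k)=\sum_{j=0}^p \alpha_j C_{k+j}$ using the known asymptotics of the Catalan numbers, $C_{k}\sim \frac{4^k}{\sqrt{\pi}\,k^{3/2}}$, and more precisely the full asymptotic expansion $C_{k}=\frac{4^k}{\sqrt\pi\,k^{3/2}}\bigl(1+\sum_{r\geq 1} c_r k^{-r}\bigr)$. The key observation is that $C_{k+j}=4^j C_k\cdot\bigl(1+\frac{h_j(1/k)}{\text{poly}}\bigr)$, i.e. each shifted Catalan number, after factoring out $4^k k^{-3/2}$, has an asymptotic expansion in powers of $1/k$ whose leading coefficient is $4^j/\sqrt\pi$. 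So writing $F(k)=\frac{4^k}{\sqrt\pi\,k^{3/2}}\,G(k)$, the function $G(k)$ has an asymptotic expansion $G(k)\sim \sum_{r\geq 0} g_r k^{-r}$ where $g_0=\sum_{j=0}^p 4^j\alpha_j$.

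First I would make the expansion precise: fix a large integer $N$ (say $N=p+1$) and write, for each $j$,
\[
C_{k+j}=\frac{4^{k+j}}{\sqrt\pi\,(k+j)^{3/2}}\Bigl(1+\sum_{r=1}^{N}\frac{c_r}{(k+j)^r}+O(k^{-N-1})\Bigr),
\]
then re-expand $(k+j)^{-3/2-r}=k^{-3/2-r}(1+j/k)^{-3/2-r}$ in powers of $1/k$ via the binomial series. Collecting terms gives
\[
F(k)=\frac{4^k}{\sqrt\pi\,k^{3/2}}\Bigl(\sum_{r=0}^{N} \frac{g_r}{k^{r}}+O(k^{-N-1})\Bigr),
\]
with each $g_r$ an explicit integer-combination of the $\alpha_j$'s (rational coefficients times $\alpha_j$). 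Let $q$ be the smallest index $r$ with $g_r\neq 0$; such a $q\leq p$ exists, for if $g_0=g_1=\cdots=g_p=0$ then, since $g_0,\ldots,g_p$ are obtained from $\alpha_0,\ldots,\alpha_p$ by an invertible (upper-triangular with nonzero diagonal) linear transformation — the leading term $g_r$ picks up $\alpha_r$ with a nonzero coefficient plus earlier $\alpha_j$'s — we would force all $\alpha_j=0$, contradicting the hypothesis. Hence $F(k)=\frac{g_q\,4^k}{\sqrt\pi\,k^{q+3/2}}(1+o(1))$, and since $g_q\neq 0$ this gives $F(k)\in\Theta\bigl(4^k/k^{q+3/2}\bigr)$, as claimed.

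The main obstacle is verifying that the linear map $(\alpha_0,\ldots,\alpha_p)\mapsto(g_0,\ldots,g_p)$ is invertible, i.e. that no nontrivial integer combination of $C_k,C_{k+1},\ldots,C_{k+p}$ decays faster than $4^k k^{-p-3/2}$. The clean way to see this is to note that the generating-function identity $C(x)=\frac{1-\sqrt{1-4x}}{2x}$ shows the $C_k$ are a "moment-type" sequence: the functions $k\mapsto C_{k+j}$ for $j=0,\ldots,p$ are linearly independent, and more quantitatively, the associated expansion coefficients form a triangular system. Equivalently, one can argue by contradiction using the $o$-asymptotics: if some nontrivial combination were $o(4^k k^{-p-3/2})$, then dividing by $4^k k^{-3/2}$ and taking successive limits of $k^r\cdot(\cdots)$ for $r=0,1,\ldots,p$ would kill the coefficients one at a time and give all $\alpha_j=0$. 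This reduces the whole proposition to the single fact that the $(p+1)\times(p+1)$ matrix whose $(r,j)$ entry is the coefficient of $k^{-r}$ in the $1/k$-expansion of $4^{-j}C_{k+j}/(C_k)$ — which is upper triangular with $1$'s on the diagonal after the normalization — is invertible, which is immediate.
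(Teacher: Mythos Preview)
Your strategy—expand $F(k)=\frac{4^k}{\sqrt{\pi}\,k^{3/2}}\sum_{r\geq 0} g_r k^{-r}$ and argue that the linear map $(\alpha_0,\ldots,\alpha_p)\mapsto(g_0,\ldots,g_p)$ is invertible—is sound in outline, but your justification of invertibility does not work: the matrix is \emph{not} triangular in either ordering. Carrying out the expansion, the coefficient of $k^{-r}$ in $\sqrt{\pi}\,k^{3/2}4^{-k}C_{k+j}$ equals $4^jP_r(j)$, where $P_r$ is a polynomial of degree exactly $r$ in $j$ (the leading term $\binom{-3/2}{r}j^r$ comes from the binomial expansion of $(1+j/k)^{-3/2}$; all other contributions are of lower degree in $j$). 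Thus $g_r=\sum_{j=0}^p 4^j\alpha_j\,P_r(j)$ involves \emph{all} the $\alpha_j$; already $g_0=\sum_j 4^j\alpha_j$. Concretely, the matrix $\bigl([k^{-r}]\,4^{-j}C_{k+j}/C_k\bigr)_{r,j}$ has first row $(1,1,\ldots,1)$ and $(r,j)=(2,1)$-entry equal to $3\neq 0$, so it is neither upper nor lower triangular, and the ``kill one coefficient at a time'' version of the argument fails for the same reason. Invertibility does hold, but via a Vandermonde argument: since $P_0,\ldots,P_p$ span the polynomials of degree $\leq p$, the vanishing of $g_0,\ldots,g_p$ is equivalent to $\sum_j(4^j\alpha_j)\,j^r=0$ for $r=0,\ldots,p$, which forces every $4^j\alpha_j=0$. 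With this fix your proof goes through.

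The paper bypasses asymptotic expansions entirely. From the exact recurrence $C_{k+1}=\frac{2(2k+1)}{k+2}C_k$ one gets $C_{k+i}=C_k\,P_i(k)/Q_i(k)$ with $\deg P_i=\deg Q_i=i$, hence $F(k)=C_k\cdot\mathcal{R}(k)$ for a rational function $\mathcal{R}$ with numerator and denominator of degree at most $p$. A short generating-function observation (for large $k$ the coefficients of $\sum_k F(k)x^{k+p}$ agree with those of $\frac{1-\sqrt{1-4x}}{2x}(\alpha_0x^p+\cdots+\alpha_p)$, and $\sqrt{1-4x}$ is not rational) shows $F$ is not eventually zero, so the numerator of $\mathcal{R}$ is nonzero and $\mathcal{R}(k)\in\Theta(k^{-q})$ for some integer $0\leq q\leq p$; combining with $C_k\in\Theta(4^k/k^{3/2})$ finishes. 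This gives the bound $q\leq p$ directly from a degree count, with no linear-algebra step at all.
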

\begin{proof}
First of all, note that the coefficients of the power series $\sum_k F(k)x^{k+p}$ for large $k$ coincide with the coefficients of the series for 
$$\frac{1-\sqrt{1-4x}}{2x}(\alpha_0x^p+ \cdots  +\alpha_p).$$ Since $\sqrt{1-4x}$ is not a rational function, the values $F(k)$ cannot all be zero starting for some $k$.

Recall that $C_{k+1}=\frac {2(2k+1)}{k+2} C_{k}$. Therefore
$$C_{k+i}= C_k \cdot \frac{2^i(2k+1)\cdots(2k+i)}{(k+2)\cdots (k+i+1)} = C_k \frac{P_i(k)}{Q_i(k)}$$ 
for some polynomials $P_i,Q_i$ of degree $i$. This implies that 
$$F(k)=C_k\cdot \mathcal R(k),$$ where $\mathcal R(k)$ is a rational function with integer coefficients and the numerator and denominator of $\mathcal R$ are polynomials of degree at most $p$. The numerator of $\mathcal R$ cannot be 0 because $F(k)$ does not stabilise to $0$. Thus, $\mathcal R(k)\in\Theta(\sfrac{1}{k^q})$ for some integer $q$ between $0$ and $p$. Taking into account the asymptotics for $C_k$ completes the proof.
\end{proof}

\begin{rem}
A similar result without the specific power of $k$ in the denominator can be obtained using algebraic generating functions. We refer to \cite[Ch. 6]{Sta1} for more details.
\end{rem}

\begin{thm}\label{thm:stab}
There is an integer $q$ such that a subsequence of $d_{\bs a}(n)$ belongs to $\Omega \left( \dfrac{2^n}{n^{q+\sfrac32}}\right)$.
\end{thm}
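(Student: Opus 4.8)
The plan is to chain together the results already established in this section. By Corollary~\ref{cor:twist}, we have $[{\bs x}_{\bs a}R(\bs{A})\bs{A}^n]_0=(RQ)*d_{\bs a}(n)$, which is an explicit linear combination of finitely many shifts of $d_{\bs a}$, with integer coefficients coming from the polynomial $R(x)Q(x)$. On the other hand, by Lemma~\ref{lem:catlinear}, this same quantity equals $\alpha_0 C_k+\cdots+\alpha_p C_{k+p}$ when $n=2k$ and $\beta_0 C_k+\cdots+\beta_p C_{k+p}$ when $n=2k+1$, with not all of the $\alpha_i,\beta_i$ equal to zero. So at least one of the two polynomials $F_{\mathrm{even}}(k)=\sum_i\alpha_i C_{k+i}$ or $F_{\mathrm{odd}}(k)=\sum_i\beta_i C_{k+i}$ is a nonzero combination of Catalan numbers, and Proposition~\ref{lem:growth} then gives an integer $0\le q\le p$ with that combination lying in $\Theta(4^k/k^{q+\sfrac32})$. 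Since $4^k=2^{2k}$, along the corresponding subsequence of indices $n$ (either $n=2k$ or $n=2k+1$), this shows $(RQ)*d_{\bs a}(n)\in\Theta\!\left(2^n/n^{q+\sfrac32}\right)$ after absorbing the bounded factor relating $n$ and $k$ into the $\Theta$.

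The remaining step is to transfer this lower bound on the twisted sum $(RQ)*d_{\bs a}(n)=\sum_{j}c_j\,d_{\bs a}(n+j)$ back to a lower bound on $d_{\bs a}$ itself along some subsequence. Here I would use a simple pigeonhole/triangle-inequality argument: if $(RQ)*d_{\bs a}(n)$ has modulus at least $c\cdot 2^n/n^{q+\sfrac32}$ for infinitely many $n$, then for each such $n$ at least one of the finitely many terms $c_j d_{\bs a}(n+j)$ has modulus at least $\frac{1}{\deg(RQ)+1}\cdot c\cdot 2^n/n^{q+\sfrac32}$, hence $|d_{\bs a}(n+j)|\ge c'\cdot 2^{n+j}/(n+j)^{q+\sfrac32}$ for a smaller constant $c'>0$ (using $2^{n+j}\le 2^{\deg(RQ)}2^n$ and $(n+j)^{q+\sfrac32}\le (n+\deg(RQ))^{q+\sfrac32}$, both bounded ratios since $j$ ranges over a finite set). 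As $n$ ranges over an infinite set and $j$ over a finite one, some fixed shift $j$ works for infinitely many $n$, producing a subsequence $n_\ell=n+j$ along which $|d_{\bs a}(n_\ell)|\ge c'\cdot 2^{n_\ell}/n_\ell^{q+\sfrac32}$, i.e.\ the subsequence lies in $\Omega\!\left(2^n/n^{q+\sfrac32}\right)$.

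The main obstacle is ensuring that the $\Theta$ lower bound does not get destroyed by cancellation when passing from the twisted sum to $d_{\bs a}$; the pigeonhole step above handles this precisely because $\Omega$ only requires a positive $\liminf$ of $|d_{\bs a}(n_\ell)|/(2^{n_\ell}/n_\ell^{q+\sfrac32})$ along \emph{some} subsequence, not a two-sided $\Theta$ estimate, so extracting one dominant term suffices. A secondary point to check is that the polynomial $R(x)Q(x)$ is not accidentally the zero map on the relevant vectors: this is exactly guaranteed by Lemma~\ref{lem:nonzero} together with the ``not all $\alpha_i,\beta_i$ are zero'' conclusion of Lemma~\ref{lem:catlinear}, so no new argument is needed. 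Finally, one should note that the integer $q$ produced here is the same $q$ appearing in the statement, and that $q\le p=\deg g$, which ties back to the qualitative description of the discrepancy in the introduction.
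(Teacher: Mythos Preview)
Your proposal is correct and uses the same chain of results as the paper (Corollary~\ref{cor:twist}, Lemma~\ref{lem:catlinear}, Proposition~\ref{lem:growth}). The only stylistic difference is that the paper argues by contradiction---assuming $d_{\bs a}(n)\in o\bigl(2^n/n^{q+\sfrac32}\bigr)$ for every $q$, and noting that then the finite linear combination $(RQ)*d_{\bs a}$ would also be $o\bigl(2^n/n^{q+\sfrac32}\bigr)$, contradicting the $\Theta$ estimate along a parity class---whereas you carry out the equivalent direct extraction via pigeonhole on the shifts $j$; these are contrapositives of one another.
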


\begin{proof}
Assume to the contrary that $d_{\bs a}(n)\in o\left( \frac{2^n}{n^{q+\sfrac32}}\right)$ for every integer $q$. 
Using Lemma \ref{lem:catlinear} and Proposition \ref{lem:growth} we get that either for odd $n$ or for even $n$, the subsequence $[{\bs x}_{\bs a}R(\bs{A})\bs{A}^n]_0 $ is in $\Theta \left( \frac{2^n}{n^{q+\sfrac32}}\right)$ for some non-negative integer $q$.

From Corollary \ref{cor:twist}, $[{\bs x}_{\bs a}R(\bs{A})\bs{A}^n]_0=(RQ)*d_{\bs a}$. By our assumption, for every fixed $i\geq 0$, $d_{\bs a}(n+i)$ is in $o\left( \frac{2^n}{n^{q+\sfrac32}}\right)$ and so is the $RQ$-twist $(RQ)*d_{\bs a}$, which contradicts the previous estimate. 
\end{proof}

It is clear that $\Omega \left( \dfrac{2^n}{n^{q^{ }_0+\sfrac32}}\right)\subset \Omega \left( \dfrac{2^n}{n^{q+\sfrac32}}\right)$, 
for all $q\geqslant q_0$, and so one can assume without loss of generality that $q$ is non-negative in the theorem above. This formulation is closer to \emph{lower bounds} for asymptotic behaviours exhibited by substitutions on finite alphabets. We emphasise, however, that examples for which $d_{\bs{a}}(n)$ exhibits \emph{precise} asymptotics (as in Theorem~\ref{thm:all1}) with non-negative $q$ can only occur in the infinite alphabet setting. On the other hand, there also exist examples for which $q=-1$ in the actual asymptotics along a subsequence; see Example~\ref{ex:neg-q}.




\begin{rem} \label{rem:randomwalk} It is worth mentioning that there is an alternative way to prove Theorem \ref{thm:stab} without using Catalan numbers at all. First, one observes that the matrix $\frac{1}{2} B$ is the transition matrix of a random walk on the non-negative integers. The central limit theorem implies that this random walk tends to a Gaussian standard distribution for large $n$, and  can thus be described by $f(x)=\frac{1}{\sqrt{2 \pi n}}e^{\sfrac{-x^2}{2n}}$.

The $p$-th derivative $f^{(p)}$ can be related to the linear
combinations of Catalan numbers in Lemma \ref{lem:catlinear}.
It turns out that $f^{(k)}(0)=0$ for $k$ odd, and $f^{(k)}=c \cdot 
\frac{1}{n^{(2p+1)/2}}$ for $p$ even. Multiplying again with $2^n$ yields exactly the growth rate in Theorem \ref{thm:stab}. 

However, using the approach with Catalan numbers does not only yield Theorem \ref{thm:stab} but gives us exact values for the results in Sections \ref{sec:catalan} and \ref{sec:exam}. 
\end{rem}

\begin{cor} \label{cor:bde}
Suppose $\rho^{ }_{\bs a}$ is a non-constant length substitution. Then no Delone
set $\Lambda$ arising from $\rho^{ }_{\bs a}$ is bounded distance equivalent to 
$\alpha \Z$, for any $\alpha>0$. 
\end{cor}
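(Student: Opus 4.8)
The plan is to derive Corollary~\ref{cor:bde} from Theorem~\ref{thm:stab} together with a standard criterion for bounded distance equivalence to a lattice. The key input is the following fact, essentially due to Duneau--Oguey and made precise in \cite{FGS,SS}: a Delone set $\Lambda \subset \R$ of density $\delta$ is bounded distance equivalent to $\delta^{-1}\Z$ if and only if the number of points of $\Lambda$ in $[0,R]$ differs from $\delta R$ by a quantity that stays bounded as $R \to \infty$ (uniformly in the left endpoint, but in one dimension a bound for intervals anchored at the origin already forces linear rigidity). So the first step is to reduce the statement ``$\Lambda$ is not BDE to any $\alpha\Z$'' to ``the point-counting discrepancy of $\Lambda$ over growing intervals is unbounded'': if $\Lambda$ were BDE to $\alpha\Z$ then necessarily $\alpha = \dens(\varLambda)^{-1} = c_{\bs a}$ (densities must match), and then the counting discrepancy would be bounded.

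Second, I would connect the geometric point-counting discrepancy over intervals to the combinatorial discrepancy $d_{\bs a}(n)$ studied above. The Delone set $\varLambda_{\mathcal T}$ comes from the tiling fixed point $\mathcal T$ of $\varrho_{\bs a}$, and the patch covering the $n$-th order supertile $\varrho_{\bs a}^n([0])$ occupies an interval of length $\lambda^n$ (since $\ell([0]) = 1$). The number of points of $\varLambda_{\mathcal T}$ inside that interval is exactly $\#_{\bs a}(n) = [(1,1,1,\dots)\bs A^n]_0$, while $\dens(\varLambda_{\mathcal T})$ times the interval length is $\sfrac{1}{c_{\bs a}}\cdot\lambda^n$. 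Hence the counting discrepancy over this particular nested family of intervals is precisely $d_{\bs a}(n)$. By Theorem~\ref{thm:stab}, a subsequence of $d_{\bs a}(n)$ is in $\Omega\!\left(2^n / n^{q+\sfrac32}\right)$, which tends to infinity; so the counting discrepancy is unbounded, and therefore $\varLambda_{\mathcal T}$ is not BDE to $c_{\bs a}\Z$, hence not BDE to any $\alpha\Z$.

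Third, I would upgrade from the single fixed-point Delone set $\varLambda_{\mathcal T}$ to \emph{every} Delone set $\Lambda$ arising from $\rho_{\bs a}$. Here one uses that all such $\Lambda$ lie in a common minimal, uniquely ergodic hull (Theorem 1.4 of \cite{FGM}), so any $\Lambda$ in the hull is locally indistinguishable from $\varLambda_{\mathcal T}$: any finite patch of $\varLambda_{\mathcal T}$ reappears in $\Lambda$. In particular the patch realizing the $n$-th supertile $\varrho_{\bs a}^n([0])$ occurs in $\Lambda$, with the same point count and the same length, so the same lower bound for the counting discrepancy over (translated) intervals of length $\lambda^n$ holds for $\Lambda$. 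Since BDE to a lattice is a property invariant under the relevant notions (and in particular fails as soon as the counting discrepancy over some growing family of intervals is unbounded), we conclude $\Lambda$ is not BDE to any $\alpha\Z$. I expect the main obstacle to be bookkeeping in this last step: making the appearance of the supertile patch inside an arbitrary $\Lambda$ precise, and invoking the correct form of the Duneau--Oguey / \cite{SS} criterion (over half-open intervals anchored at the left endpoint of the supertile) so that a lower bound along a single subsequence of interval lengths genuinely obstructs BDE. This is routine given the cited results, but it is where care is needed.
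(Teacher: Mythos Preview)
Your proposal is correct and follows essentially the same route as the paper: the paper's proof is the single sentence ``This is just a consequence of Theorem~\ref{thm:stab} together with an infinite version of Hall's marriage theorem \cite{Rado},'' which is precisely your argument with the Duneau--Oguey/\cite{FGS,SS} criterion replaced by its underlying combinatorial tool (Rado's infinite Hall theorem is what furnishes the equivalence between bounded counting discrepancy and BDE to a lattice). Your additional care with minimality to pass from $\varLambda_{\mathcal T}$ to an arbitrary $\Lambda$ in the hull is a detail the paper leaves implicit.
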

This is just a consequence of Theorem \ref{thm:stab} together with an infinite
version of Hall's marriage theorem \cite{Rado}. 

\section{Two further examples --- precise values}\label{sec:exam}

In this section, we apply the general framework from the last section to two particular examples having the same values $\mu$ and $\lambda$. This yields exact expressions for the respective discrepancies and illustrates to what extent equal values for $\mu$ and $\lambda$ may still allow for different discrepancy functions; namely here: equal asymptotics, but with different constants.

\begin{exam}\label{ex:odd-zero}
For this example, we consider ${\bs a}=(1,2,2,2,\ldots)$. The associated substitution matrix is $$\bs{A}=\left( 
\begin{array}{cccccc}
1 & 3 & 2 & 2 & 2 & \dots\\
1 & 0 & 1 & 0 & 0 & \ldots\\
0 & 1 & 0 & 1 & 0 & \ldots\\
0 & 0 & 1 & 0 & 1 & \ldots\\
0 & 0 & 0 & 1 & 0 & \ldots\\
\vdots & \vdots & \vdots & \vdots & \vdots & \ddots
\end{array}
\right).$$

The equation that defines $\mu$ is
$$\frac1\mu=1+2\mu+2\mu^2+2\mu^3+ \cdots =-1+\frac{2}{1-\mu}.$$
It transforms into $1-2\mu-\mu^2=0$ so $\mu=\sqrt2-1$ and $\lambda=\mu+\sfrac{1}{\mu}=2\sqrt 2$, with the minimal polynomial for $\lambda$ being $Q(x)=x^2-8$.

Thus, ${\bs x}_{\bs a}=(1,1,1,\ldots)(\bs{A}^2-8\bs{I})=(-2,2,4,4,4,\ldots)$ and
$$[{\bs x}_{\bs a} \bs{A}^n]_0=(x^2-8)*d_{\bs a}=d_{\bs a}(n+2)-8d_{\bs a}(n).$$

Now we turn our attention to the space $\mathcal V$ of all stabilizing sequences that satisfy ${\displaystyle \sum x_i\mu^i=0}$ and to the subspace spanned by the vectors ${\bs e}_i$ where
$$
\begin{array}{l}
{\bs e}_0=(-1,1,2,2,2,2,\ldots),\\
{\bs e}_1=(1,-2,-1,0,0,0,\ldots),\\
{\bs e}_2=(0,1,-2,-1,0,0,\ldots),\\
{\bs e}_3=(0,0,1,-2,-1,0,\ldots),
\end{array}$$
and so on. These are the vectors from Eq.~\eqref{eq:E-basis}.

The vector ${\bs x}_a$ lies already in this subspace because ${\bs x}_{\bs a}=2{\bs e}_0$. So we can use $R(x)=1$ in Lemma \ref{lem:subspace}. Actually, in this particular example, $\mathcal V=\left\langle \mathcal{E}\right\rangle$, as it was in Section \ref{sec:catalan}. 

Also, in Lemma \ref{lem:poly}, we can use $g(x)=2$. Plugging this data into Corollary \ref{cor:twist} and Lemma \ref{lem:catlinear}, we get ${\bs x}_a\bs{A}^n=2\bs{B}^n({\bs e}_0)$ and therefore
$$d_{\bs a}(n+2)-8d_{\bs a}(n)=[{\bs x}_a\bs{A}^n]_0=\left\{
\begin{array}{rl}
-2C_k& \text{ if } n=2k,\\
0& \text{ if } n=2k+1.
\end{array}\right.
$$

At this point we can see that $d_{\bs a}(2k+1)=8^kd_{\bs a}(1)$ so this subsequence appears to be in $\Theta((2\sqrt 2)^n)$. Such a term must be a leading term in $\#_{\bs a} (n)=[(1,1,1,\ldots)\bs{A}^n]_0$ and should not appear in $d_{\bs a}(n)$. However, once we track the expected length $c_{\bs a}$, this will not be an issue.

More specifically, the frequency of the tile $[i]$ is $(1-\mu)\mu^i=(2-\sqrt 2)(\sqrt 2-1)^i$ and the length of the tile $[i]$ is $\sqrt 2 + 1 -\sqrt 2(\sqrt 2-1)^i$. 
This gives the expected tile length to be 
$$c_{\bs a}=\sum_{i=0}^\infty (2-\sqrt 2)(\sqrt 2-1)^i \Bigl( \sqrt 2 + 1 -\sqrt 2(\sqrt 2-1)^i \Bigr)=\sqrt 2.$$

One then has
$$\#_{\bs a} (n)=[(1,1,1,\ldots)\bs{A}^n]_0=\frac{1}{c_{\bs a}}\lambda^n+d_{\bs a}(n)=\frac{1}{\sqrt 2}(2\sqrt 2)^n+d_{\bs a}(n).$$
This gives $d_{\bs a}(0)=1-\sfrac{1}{\sqrt 2}$ and $d_{\bs a}(1)=0$ because $(1,1,1,...)\bs{A}=(2,4,4,4,\ldots)$. Therefore, $d_{\bs a}(2k+1)=0$ for every integer $k\geq 0$ due to the recurrence relation above.

If $n$ is even, a similar approach as in Section \ref{sec:catalan} using tails of the power series from Catalan numbers gives the growth rate $d_{\bs a}(n)\in\Theta\left( \frac{2^n}{n^{\sfrac32}}\right)$.
\end{exam}

The next example shares the same $\mu$ and $\lambda$ with Example~\ref{ex:odd-zero}, but one has $\mathcal{V}\neq \left\langle \mathcal{E} \right\rangle$. Hence, we will need to apply all steps for the general case in Section~\ref{sec:stab}.

\begin{exam} 
Consider ${\bs a}=(1,1,3,4,4,4,4,\ldots)$. The associated substitution matrix is $$\bs{A}=\left( 
\begin{array}{ccccccc}
1 & 2 & 3 & 4 & 4 & 4 & \dots\\
1 & 0 & 1 & 0 & 0 & 0 & \ldots\\
0 & 1 & 0 & 1 & 0 & 0 &\ldots\\
0 & 0 & 1 & 0 & 1 & 0 & \ldots\\
0 & 0 & 0 & 1 & 0 & 1 &\ldots\\
0 & 0 & 0 & 0 & 1 & 0 &\ldots\\
\vdots & \vdots & \vdots & \vdots & \vdots & \vdots & \ddots
\end{array}
\right).$$

The equation that defines $\mu$ is
$$\frac1\mu=1+\mu+3\mu^2+4\mu^3+4\mu^4+4\mu^5+ \cdots =-3-3\mu-\mu^2+\frac{4}{1-\mu}.$$
It transforms into $1-2\mu-2\mu^3-\mu^4=0$ or $(1-2\mu-\mu^2)(1+\mu^2)=0$ so $\mu=\sqrt2-1$ and $\lambda=\mu+\sfrac{1}{\mu}=2\sqrt 2$, with minimal polynomial $Q(x)=x^2-8$ as before.

However, ${\bs x}_{\bs a}=(1,1,1,\ldots)(\bs{A}^2-8\bs{I})=(-3,1,7,11,12,12,12,12,\ldots)$.

Since $\mu$ and $\lambda$ are the same as in the previous example, $\mathcal V$ is still the space of all stabilizing sequences that satisfy ${\displaystyle \sum x_i\mu^i=0}$. But the vectors ${\bs e}_i$ and the subspace they span are different now. In particular, $\mathcal E=\{{\bs e}_i\}_{i\geq 0}$ where
$$
\begin{array}{l}
{\bs e}_0=(-1,1,1,3,4,4,4,\ldots),\\
{\bs e}_1=(1,-2,0,-2,-1,0,0,0,\ldots),\\
{\bs e}_2=(0,1,-2,0,-2,-1,0,0,\ldots),\\
{\bs e}_3=(0,0,1,-2,0,-2,-1,0,\ldots),
\end{array}$$
and so on. In this case, the span $\langle \mathcal E\rangle$ is of codimension $2$ in $\mathcal V$ and ${\bs x}_{\bs a}$ is not in the span.

Observing that ${\bs x}_{\bs a}=3{\bs e}_0+(0,-2,4,2,0,0,0,\ldots)$, we write
$$
\begin{array}{rl}
{\bs x}_{\bs a}+\langle \mathcal E\rangle=&(0,-2,4,2,0,0,0,\ldots)+\langle \mathcal E\rangle, \text{ and}\\
{\bs x}_{\bs a}\bs{A} +\langle \mathcal E\rangle=&(-2,4,0,4,2,0,0,\ldots)+\langle \mathcal E\rangle=-2{\bs e}_1+\langle \mathcal E\rangle=\langle \mathcal E\rangle.\\
\end{array}$$

Thus, we can take $R(x)=x$ in Lemma \ref{lem:subspace} and write
$${\bs x}_{{\bs a}} \bs{A}=(3{\bs e}_0+(0,-2,4,2,0,0,0,\ldots))\bs{A}=3({\bs e}_0+{\bs e}_1)-2{\bs e}_1=3{\bs e}_0+{\bs e}_1.$$

For Lemma \ref{lem:poly}, we take $g(x)=x+2$ because $$(\bs{B}+2\bs{I})({\bs e}_0)=({\bs e}_0+{\bs e}_1)+2{\bs e}_0={\bs x}_{{\bs a}}\bs{A}={\bs x}_{{\bs a}} R(\bs{A}).$$

Then, ${\bs x}_{\bs a} R(\bs{A})\bs{A}^{n}=\bs{B}^{n}((\bs{B}+2\bs{I})({\bs e}_0))=\bs{B}^{n+1}({\bs e}_0) + 2\bs{B}^{n}({\bs e}_0)$. We use $RQ=x(x^2-8)$ in Corollary \ref{cor:twist} and Lemma \ref{lem:catlinear}  and write
$$d_{\bs a}(n+3)-8d_{\bs a}(n+1)=(x(x^2-8))*d_{\bs a}=[{\bs x}_{\bs a} R(\bs{A})\bs{A}^n]_0=\left\{
\begin{array}{rl}
-2C_k,& \text{ if } n=2k,\\
-C_{k+1},& \text{ if } n=2k+1.
\end{array}\right.
$$

Here, we can see that the two expressions for even and odd $n$ can be different linear combinations of (different) Catalan numbers. Nevertheless, we still have $d_{\bs a}(n)\in\Theta\left( \frac{2^n}{n^{\sfrac32}}\right)$ as the subsequences of odd and even $n$ can be written using tails of the Catalan series as in Section \ref{sec:catalan} and as in the previous example. However, the constants in the inequalities defining the order of growth are different for odd and even $n$.
\end{exam}

\section{Even more examples --- numerical evidence}\label{sec:moreexam}
In this section, we give more examples of eventually constant sequences ${\bs a}$ that lead to various behaviours of the tile counting function that have not appeared in this paper so far.  We provide examples of sequences which (i) give rise to eigenvalues different from $\lambda$ which show up in the asymptotics,  (ii) admit ``fake'' eigenvalues that appear in finding the general solution of the equation in Lemma \ref{lem:catlinear} treated as a linear recurrence relation, but do not appear in the actual numerical solution, and (iii)  illustrate the fact that, in Theorem \ref{thm:stab}, the parameter $q$ for actual asymptotics may have negative values as well.

For each example described below, our computational experiments in Wolfram Mathematica \cite{Mat} are organised as follows. 

Given the sequence ${\bs a}$, we generate the matrix $\bs{A}_1$ obtained from $\bs{A}$ by taking its upper-left corner of size $201\times 201$. For the vector ${\bs t}=(1,0,0,\ldots,0)^t$ of size 201 and for every $n=1,\ldots,200$, the sum of entries of $\bs{A}_1^n \bs t$ counts the number $\#_{\bs a}(n)$ of tiles in the supertile $\rho_{\bs a}^n([0])$ because none of these supertiles contain tiles outside of $\{[0],[1],\ldots,[200]\}$.

Using Proposition \ref{prop:mulambda}, we can find precise values of the respective $\mu$ and $\lambda$, and the density of the resulting Delone set. This allows us to find the discrepancy function $d_{\bs a} (n)$ for $n=1,\ldots,200$ and, in the cases below, infer its correct asymptotics.

\begin{rem} \label{rem:itlog}
Whereas the values in our computations are exact for $n$ up to
200, they are pretty accurate for much larger $n$: as outlined in Remark \ref{rem:randomwalk} we can describe the discrepancy as a random walk on the non-negative integers. By the law of the iterated logarithm \cite{khi,kol},
our random walk stays below position $n$ after essentially $n^2$ steps
almost surely (more precisely, after $\sqrt{2 \log \log n^2} n^2$ steps or less). So we might alternatively have carried out the computations up
to $n=10^4$ or so, without losing much precision. As before, we decided
to restrict ourselves to exact values.
\end{rem}

The frequencies and density can be obtained from Proposition \ref{prop:freqdens}. An alternative, and for most cases described below more useful, method to get the length function $\ell$ is to treat it as a left eigenvector of $\bs{A}$ with eigenvalue $\lambda$. The structure of $\bs{A}$ ensures that, starting from some $k$, $\ell$ satisfies the equation
$$a+\ell([k-1])+\ell([k+1])=\lambda \ell([k]),$$
where $a$ is the value to which ${\bs a}$ stabilises. This equation can be treated as non-homogeneous linear recurrence which can be solved explicitly knowing several starting terms (in this case, we have $k+1$ initial conditions). We can also simplify the process a bit by using that $\ell$ must be continuous, and hence bounded on the alphabet $\mathcal A$. 

For all examples described below, we have $k=1$ or $k=2$, so the initial values of the corresponding recurrence as well as the density can be computed manually. We illustrate how this approach works in the example below. 

\begin{exam}\label{ex:extra-eigen}
Consider the sequence ${\bs a}=(1,9,9,9,\ldots)$. As we will see, the bound from Theorem \ref{thm:stab} is a lower bound in this case, but the actual growth of the discrepancy is exponential with base greater than $2$. Nevertheless, we can guess the leading term of the discrepancy by finding additional eigenvectors of the infinite matrix $\bs{A}$, and by isolating the corresponding exponential term; the remaining part of discrepancy still exhibits Catalan-like growth.

Using Eq.~\eqref{eq:mu}, $\mu$ can be computed from the equation
$$\frac 1\mu=\sum a_i\mu^i=1+9(\mu+\mu^2+\mu^3+ \cdots )=1+\frac{9\mu}{1-\mu}.$$
This transforms into the quadratic equation $8\mu^2+2\mu-1=0$ with two solutions $\mu=\sfrac 14$ and $\mu_*=\sfrac{-1}{2}$, where the former defines the inflation factor $\lambda =\sfrac{17}{4}$.

In order to find the lengths, we recall that the vector
$(1,\ell([1]),\ell([2]),\ell([3]),\ldots)$ is a left $\lambda$-eigenvector of the matrix 
$$\bs{A}=\left( 
\begin{array}{cccccc}
1 & 10 & 9 & 9 & 9 & \dots\\
1 & 0 & 1 & 0 & 0 & \ldots\\
0 & 1 & 0 & 1 & 0 & \ldots\\
0 & 0 & 1 & 0 & 1 & \ldots\\
0 & 0 & 0 & 1 & 0 & \ldots\\
\vdots & \vdots & \vdots & \vdots & \vdots & \ddots
\end{array}
\right).$$
Therefore, $1+\ell([1])=\sfrac{17}{4}$ and $\ell([1])=\sfrac{13}{4}$. Additionally, for $k\geq 1$,
$$9+\ell([k-1])+\ell([k+1])=\frac{17}{4}\ell([k]).$$
Rewriting the above equation into
$$\ell([k+1])-\frac{17}{4}\ell([k])+\ell([k-1])=-9,$$
we can treat it as a non-homogeneous linear recurrence with solution 
$\ell([k])=\alpha\cdot \left(\sfrac14\right)^k+\beta \cdot 4^k+4$,
with initial terms $\ell([0])=1$ and $\ell([1])=\sfrac{13}{4}$.

The initial values allow us to find $\alpha =-3$ and $\beta =0$. Note that the latter guarantees that $\ell([\cdot])$ is bounded, as it is required for left eigenvectors of $\bs{A}$. Overall, $\ell([k])=4-\sfrac{3}{4^k}$.

Recalling that the frequency of the letter $[k]$ is $\nu([k])=(1-\mu)\mu^k$, we get the average tile length to be
$$c_{\bs{a}}=\sum_{k=0}^\infty \ell([k])\nu([k])=\sum_{k=0}^\infty \left(4-\frac{3}{4^k}\right)\cdot \left(1-\frac14\right)\frac{1}{4^k}=\frac85.$$
Hence the corresponding density is $\sfrac 58$. 

As before, the density gives the coefficient for the leading term. That is, 
$$\#_{\bs a}(n) \; \in \; \frac 58 \left (\frac{17}{4} \right)^n + o \left(\left (\frac{17}{4} \right)^n\right).$$
For the discrepancy, we want to estimate
$\#_{\bs a}(n)- \frac 58 \left (\frac{17}{4} \right)^n$.

Initally, we expected that this difference will grow as $\dfrac{2^n}{n^{\sfrac{3}{2}}}$ as in Theorem \ref{thm:all1}, but it turns out that it actually grows faster. The numerical computations suggest that the growth rate comes from the second solution $\mu_*=\sfrac{-1}{2}$ of the quadratic equation for $\mu$ discussed above.

More precisely, $\mu_*$ gives rise to $\lambda_*=\mu_*+\sfrac{1}{\mu_*}=\sfrac{-5}{2}$ which is an eigenvalue of $\bs{A}$ with left eigenvector $\ell_*$ given by
$$\ell_*([k])=\mu_*^k+\sum_{j=1}^k\sum_{i=j}^\infty a_i\mu_*^{i+k+1-2j};$$
compare with Eq.~\eqref{eq:closed-form}. For every $k\geq 0$, the corresponding series are convergent because ${\bs a}$ is bounded and $|\mu_*|<1$.

In the finite-dimensional case, every eigenvalue of an operator ${\bs A}$ may trigger corresponding exponential growth in the image of ${\bs A}^n$. A similar effect can be observed here. Since all of our operators here are quasi-compact, they all admit finitely many eigenvalues outside the essential spectrum, which means the expansion $\#_{\bs a}(n)$ contains finitely many (exponential) terms before the 
Catalan-like term; see Remarks~\ref{rem:quasi-compact} and \ref{rem:ess-spec}. 

In particular, our computations show that
$$\#_{\bs a}(n)- \frac 58 \left (\frac{17}{4} \right)^n \; \in \; \frac 14 \left (-\frac{5}{2} \right)^n+o\left(\left (\frac{5}{2} \right)^n\right)$$
and the difference shows Catalan-like growth rate of $\dfrac{2^n}{n^{\sfrac{3}{2}}}$, see Figure \ref{fig:secondeigenvalue}. 

\begin{figure}
    \centering
    \includegraphics[width=0.5\textwidth]{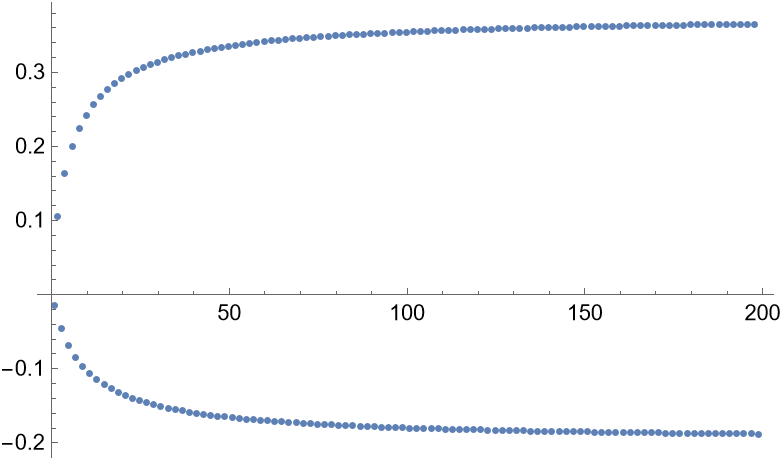}
    \caption{For the sequence ${\bs a}=(1,9,9,9,\ldots)$, the plot shows the ratio $$\frac{\#_{\bs a}(n)- \frac 58 \left (\frac{17}{4} \right)^n- \frac 14 \left (-\frac{5}{2} \right)^n}{\sfrac{2^n}{n^{\sfrac{3}{2}}}}.$$.}
    \label{fig:secondeigenvalue}
\end{figure}

Overall, we conjecture that, for ${\bs a}=(1,9,9,9,\ldots)$, we have
\begin{equation}\label{eq:secondeigenvalue}
\#_{\bs a}(n) \; \in \; \frac 58 \left (\frac{17}{4} \right)^n+ \frac 14 \left (-\frac{5}{2} \right)^n+\Theta\left(\frac{2^n}{n^{\sfrac32}}\right).
\end{equation}
\end{exam}

\begin{exam} For the next example, we consider the sequence ${\bs a}=(3,1,1,1,\ldots)$. This sequence was studied by Ma\~nibo, Rust, and Walton in \cite[Ex.~6.14]{MRW2}. This sequence gives rise to alternative values $\mu_*$ and $\lambda_*$ as well, but $\lambda_*$ is not an eigenvalue of $\bs{A}$ and does not appear in the counting function $\#_{\bs a}(n)$.

As before, we use  Eq.~\eqref{eq:mu} to write
$$\frac1\mu=3+\mu+\mu^2+\mu^3+ \cdots =3+\frac{\mu}{1-\mu}.$$
This transforms into the quadratic equation $2\mu^2-4\mu+1=0$ with two solutions $\mu=1-\sfrac{1}{\sqrt 2}$ and $\mu_*=1+\sfrac{1}{\sqrt 2}$. The former defines $\lambda =3+\sfrac{1}{\sqrt 2}$, the inflation factor also obtained in \cite[Ex.~6.14]{MRW2}.

Using a similar approach to the lengths, we find that the average length of tiles is $\frac{12-4\sqrt 2}{7}$. Thus,
$$\#_{\bs a}(n) \; \in \; \frac{7}{12-4\sqrt 2}\left(3+\frac{1}{\sqrt 2}\right)^n+o\left(\left(3+\frac{1}{\sqrt 2}\right)^n \right).$$
More precisely, our experiments show that, for the sequence ${\bs a}=(3,1,1,1,\ldots)$,
\begin{equation}\label{eq:nosecond}
\#_{\bs a}(n) \; \in \;  \frac{7}{12-4\sqrt 2}\left(3+\frac{1}{\sqrt 2}\right)^n+\Theta\left(\frac{2^n}{n^{\sfrac32}}\right),\end{equation}
see Figure \ref{fig:nosecond}.

\begin{figure}
    \centering
    \includegraphics[width=0.5\textwidth]{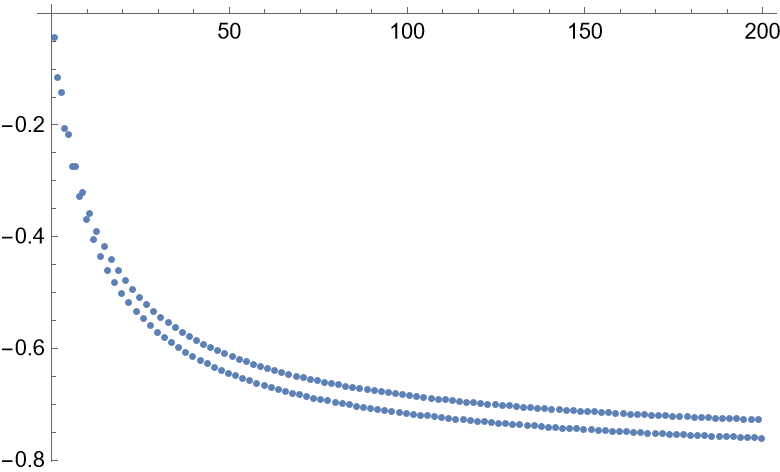}
    \caption{For the sequence ${\bs a}=(3,1,1,1,\ldots)$, the plot shows the ratio $$\frac{\#_{\bs a}(n)- \frac{7}{12-4\sqrt 2}\left(3+\frac{1}{\sqrt 2}\right)^n}{\sfrac{2^n}{n^{\sfrac{3}{2}}}}.$$.}
    \label{fig:nosecond}
\end{figure}

Note that $\lambda_*=\mu_*+\sfrac{1}{\mu_*}=3-\sfrac{1}{\sqrt 2}>2$ does not show up in the asymptotics of $\#_{\bs a}(n)$ because it is not an eigenvalue for $\bs{A}$. 
From Remark~\ref{rem:ess-spec}, if $\lambda_{\ast}$ is in the spectrum of the substitution operator, and $|\lambda_{\ast}|>2$, then $\lambda_{\ast}$ must be an eigenvalue, with eigenvector of the form given in Proposition~\ref{prop:mulambda}. However, if $|\mu_{\ast}|>1$, the corresponding eigenvector $\ell_{\ast}([k])$ is unbounded, which means $\lambda_{\ast}$ cannot be an eigenvalue. Since $\mu_*$ does not yield an eigenvector, we call $\lambda_*$ a {\it fake} eigenvalue. 

The fake eigenvalue $\lambda_*$ will appear in the general solution of the equation in Lemma~ \ref{lem:catlinear} treated as a linear recurrence. Indeed, the polynomial $R(\bs{A})$ there contains the minimal polynomial of $\lambda$ as a multiple. Since $\lambda$ and $\lambda_*$ are algebraically conjugate, they both will be roots of the corresponding characteristic polynomial. However, the initial conditions will eliminate the term $\lambda_*^n$ from the general solution, consistent with $\lambda_{\ast}$ not being in the spectrum of $M$. 

We also experimented with a slightly modified sequence ${\bs a}=(a_0,1,1,1,\ldots)$ with $a_0=4,5,6$ with the same results. For each $a_0$, there is a fake $\lambda_*$ which does not contribute to $\#_{\bs a}(n)$.

We briefly remark that both examples from Section \ref{sec:exam} exhibit similar properties, namely, both of them additionally have $\mu_*=-1-\sqrt 2$ and $\lambda_*=-2\sqrt 2$ (which has the same absolute value as $\lambda=2\sqrt 2$), but this $\lambda_*$ is again fake as it does not give rise to an eigenvector and does not appear in the corresponding tile counting function. 
\end{exam}

\begin{exam}\label{ex:neg-q}
Next, we consider ${\bs a}=(2,4,2,2,2,\ldots)$. The equation for $\mu$ is
$$\frac1\mu=2+4\mu+2\mu^2+2\mu^3+ \cdots =2\mu +\frac{2}{1-\mu}.$$
It transforms into the cubic equation $2\mu^3-2\mu^2-3\mu+1=0$ or $(\mu +1)(2\mu^2-4\mu+1)=0$. So in addition to $\mu=1-\sfrac{1}{\sqrt 2}$ and $\mu_*=1+\sfrac{1}{\sqrt2}$ from the previous example, we also have $\mu_{**}=-1$.

Thus, $\lambda=3+\sfrac{1}{\sqrt 2}$ is the same as in the previous example. The average length of tiles is $4-2\sqrt 2$. We note that, as before, $\lambda_*=3-\sfrac{1}{\sqrt 2}$ does not contribute to the counting function $\#_{\bs a}(n)$.

Additionally, $\lambda_{**}=\mu_{**}+\sfrac{1}{\mu_{**}}=-2$ is not an eigenvalue of $\bs{A}$, but it affects the behaviour of $\#_{{\bs a}}(n)$. In particular, our experiments show that it alters the power of $n$ in the denominator of the discrepancy and
\begin{equation}\label{eq:-1}
\#_{\bs a}(n)\in \frac{1}{4-2\sqrt 2}\left(3+\frac{1}{\sqrt 2}\right)^n+\Theta\left(\frac{2^n}{n^{\sfrac12}}\right),\end{equation}
see Figure \ref{fig:-1}.

Note that this is still consistent with Theorem~\ref{thm:discr-gen}, with $r_2=2$ but now $\theta(n)=n^{-1/2}$. We conjecture that it is possible to tie this behaviour to the recurrence from Lemma~\ref{lem:catlinear} and to the spectral nature of $\lambda$. The right-hand side there grows at least as $\sfrac{2^n}{n^{q+\sfrac32}}$ with some alternating behaviour for odd/even $n$, so it is possible that having $-2$ as a root of characteristic polynomial adds a linear factor to the growth of some particular solution. 

Moreover, $\lambda=-2$ is an \emph{approximate eigenvalue} of $\bs{A}$, i.e., there exists a sequence $\left\{\bs{v}^{(n)}\right\}$ of unit vectors in $\ell^1$ for which 
$|\bs{A}\bs{v}^{(n)}+2\bs{v}^{(n)}|\to 0$ as $n\to \infty$. In this case, one can choose $\bs{v}^{(n)}$ to be 
\[
\bs{v}^{(n)} =\frac{1}{n}\left((-1)^{n}n,(-1)^{n-1}(n-1),\ldots,-1,0,0,\ldots\right).
\]
It is easy to see that every $\bs{v}^{(n)}$ has unit sup-norm in $\ell^1$. Moreover, one can check that 
\[
\bs{A}\bs{v}^{(n)}+2\bs{v}^{(n)}=\begin{cases}
(\frac{3}{n},0,0,\ldots),& n\text{ even},\\
(-\frac{4}{n},0,0,\ldots),& n\text{ odd},
\end{cases}
\]
from which the required convergence follows. Since $-2$ is an approximate eigenvalue which is not an eigenvalue, it is part of the \emph{continuous spectrum} of $\bs{A}$. 
A more detailed analysis of these approximate eigenvectors might yield a better description of the asymptotics. 
\end{exam}

\begin{figure}[!h]
    \centering
    \includegraphics[width=0.5\textwidth]{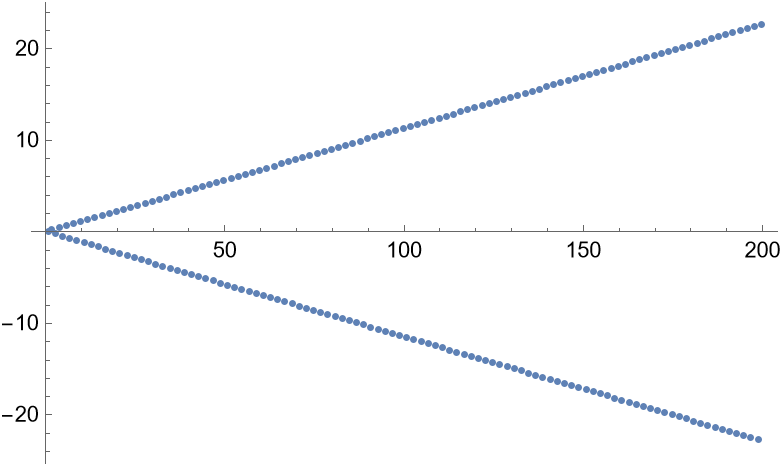}
    \caption{For the sequence ${\bs a}=(2,4,2,2,\ldots)$, the plot shows the ratio $$\frac{\#_{\bs a}(n)- \frac{1}{4-2\sqrt 2}\left(3+\frac{1}{\sqrt 2}\right)^n}{\sfrac{2^n}{n^{\sfrac{3}{2}}}}.$$ Unlike the previous plots this
    one indicates linear growth, hence an additional factor $n$.}
    \label{fig:-1}
\end{figure}

\begin{figure}[!h]
    \centering
    \includegraphics[width=0.5\textwidth]{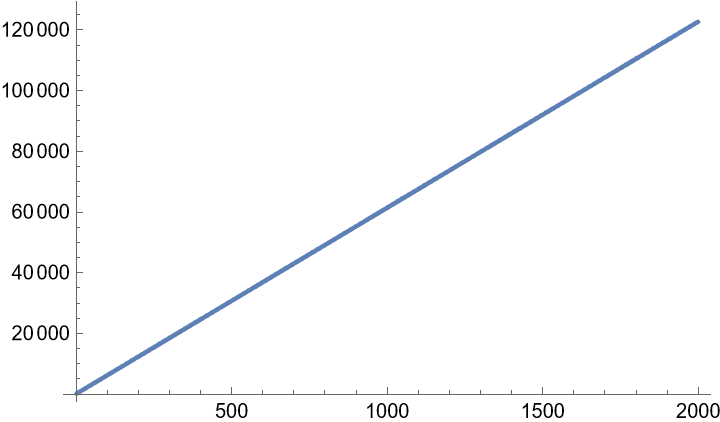}
    \caption{For the sequence ${\bs a}=(1,8,12,12,\ldots)$, the plot shows the expression $$\left(\frac{\#_{\bs a}(n)- \frac{3}{5}\left(\frac{17}{4}\right)^n-\frac15(-2)^n}{2^n}\right)^{-2}.$$}
    \label{fig:-1rep}
\end{figure}

\begin{exam}
Here, we consider ${\bs a}=(1,8,12,12,12,\ldots)$. The equation satisfied by $\mu$ reads $4\mu^3+7\mu^2+2\mu-1=(\mu+1)^2(4\mu-1)=0$. So it has $\mu=\sfrac14$ as the solution that gives $\lambda=\sfrac{17}{4}$ and $\mu_*=-1$ as root of multiplicity $2$.

However, the discrepancy between $\#_{\bs a}(n)$ and the leading term grows not as $2^n\sqrt n$ as one may guess from the previous example if every repetition of $\mu_*=-1$ (or $\lambda_*=-2$) brings a factor of $n$ to the discrepancy.

What is also different from previous examples is that $\lambda_*=-2$ is an eigenvalue with the eigenvector $(1,-3,-3,-3,\ldots)$. Moreover, this eigenvalue contributes to the tile counting function. 

Our computations (see Figure \ref{fig:-1rep}) suggest that
\begin{equation}\label{eq:-1rep}
\#_{\bs a}(n)\in \frac{3}{5}\left(\frac{17}{4}\right)^n+\frac15(-2)^n+\Theta\left(\frac{2^n}{n^{\sfrac12}}\right).\end{equation}

We note that the asymptotics of the tile counting function for this example is considerably more subtle than the rest, and we needed to check the 2000th iteration of the substitution instead of just the 200th.
\end{exam}

\begin{exam}
Our last example shows that complex values for $\mu$ may lead to relatively standard behaviour as well.

For the sequence ${\bs a}=(1,7,15,15,15,\ldots)$, the equation for $\mu$ has solutions $\mu=\sfrac14$ and $\mu_*=\frac{-1\pm i\sqrt 5 }{6}$. The inflation factor $\lambda=\sfrac{17}{4}$ defines the leading term of $\#_{{\bs a}}(n)$ and $\lambda_*=\frac{-7\pm i\cdot 5\sqrt 5}{6}$ are eigenvalues as well, since $|\mu_*|<1$ for both choices of the sign.

Again, treating the equation in Lemma \ref{lem:catlinear} as a linear recurrence, these $\lambda_*$'s should contribute an exponential-times-trigonometric function to the tile counting function $\#_{{\bs a}}(n)$; and this is exactly what our computations show, see Figure \ref{fig:complex}.
A refined expected formula for $\#_{{\bs a}}(n)$ with the exact trigonometric factor remains to be found.

\begin{figure}[!h]
    \centering
    \includegraphics[width=0.5\textwidth]{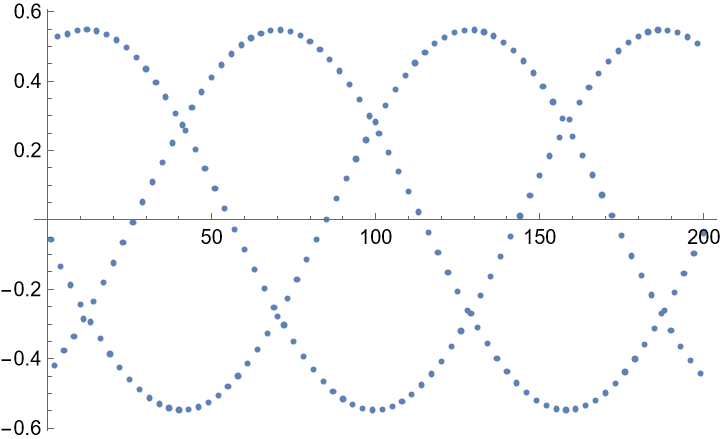}
    \caption{For the sequence ${\bs a}=(1,7,15,15,\ldots)$, the plot shows the ratio $$\frac{\#_{\bs a}(n)- \frac{1}{2}\left(\frac{17}{4}\right)^n}{\left(\sqrt{\frac{29}{6}}\right)^n}.$$}
    \label{fig:complex}
\end{figure}

\end{exam}

\section*{Acknowledgments}

The authors want to thank Igor Pak for discussions related to the asymptotics of Catalan-related sequences and corresponding references. We also would like to thank Lorenzo Sadun for numerous discussions on discrepancies of substitutions and Michael Baake for helpful comments on the manuscript. We also thank two anonymous reviewers for their valuable comments and suggestions; in particular, one for pointing the connection to one-dimensional random walks. Finally, we thank Fernando Cordero for discussions on random walks. A.G. is partially supported by the Alexander von Humboldt Foundation. N.M. is supported by the German Academic Exchange Service (DAAD) through a PRIME Fellowship.

\end{document}